\documentclass{amsart}


\usepackage[T1]{fontenc}

\usepackage{amsmath}
\usepackage{amsfonts}
\usepackage{amsthm}
\usepackage{amssymb}

\usepackage{microtype}

\usepackage{diagbox}
\usepackage{booktabs}

\usepackage{svg}
\usepackage{float}
\usepackage{pinlabel}
\usepackage{caption}
\usepackage{subcaption}
\usepackage{mathtools}

\DeclarePairedDelimiter\abs{\lvert}{\rvert}


\theoremstyle{cupthm}
\newtheorem{thm}{Theorem}[section]

\newtheorem{lemma}[thm]{Lemma}

\theoremstyle{cuprem}

\numberwithin{equation}{section}
\newtheorem{conj}[thm]{Conjecture}
\newtheorem{quest}[thm]{Question}


\newtheorem*{question*}{Question}

\newcommand{\R}{\mathbb{R}}

\newcommand{\TY}{\nabla{} Y}
\newcommand{\YT}{Y \nabla}

\newcommand{\define}[1]{\emph{#1}}

\newcommand{\F}{\mathcal{F}}
\newcommand{\Fd}{\mathcal{F}_{\Delta}}

\begin{document}


\title{Family sizes for complete multipartite graphs}

\author{Danielle Gregg}
\email{greggd@union.edu}
\address{Department of Mathematics,
  Union College, Bailey Hall 202,
  Schenectady, New York 12308}

\author{Thomas W.\ Mattman}
\email{TMattman@CSUChico.edu}
\address{Department of Mathematics and Statistics,
California State University, Chico,
Chico, CA 95929-0525}

\author{Zachary Porat}
\email{zporat@wesleyan.edu}
\address{ Department of Mathematics and Computer Science,
  Wesleyan University,
  Middletown, CT 06459}

\author{George Todd}
\email{gtodd1@udayton.edu}
\address{Department of Mathematics,
  University of Dayton,
  Dayton, OH, 45469}


\begin{abstract}
  The obstruction set for graphs with knotless embeddings is not known, but a recent paper of Goldberg, Mattman, and Naimi indicates that it is quite large.
  Almost all known obstructions fall into four Triangle-Y families and they ask if there is an efficient way of finding or estimating the size of such graph families.
  Inspired by this question, we investigate the family size for complete multipartite graphs.
  Aside from three families that appear to grow exponentially, these families stabilize: after a certain point, increasing the number of vertices in a fixed part does not change family size.
\end{abstract}

\keywords{spatial graphs, intrinsic knotting}

\maketitle

\section{Introduction}

This paper is inspired by the question of Goldberg et al.~\cite{GMN}:

\begin{question*}[\cite{GMN}, Question 4]
  Given an arbitrary graph, is there an efficient way of finding, or at least estimating, how many cousins it has?
\end{question*}

We show that, in the case of a complete multipartite graph, there is quite a lot one can say about its family size.

For us, graphs are finite, undirected, and simple.
We say that $H$ is a \define{minor} of $G$ if $H$ is obtained by contracting edges in a subgraph of $G$.
The Graph Minor Theorem of Robertson and Seymour~\cite{Seymour}, perhaps the most important result in graph theory, says that any property of graphs that is inherited by minors has a finite obstruction set.
Here, we are primarily interested in topological properties of graphs.

For example, the obstruction set for graph planarity (embedding a graph in the plane, or the sphere, with no crossings) contains only the complete multipartite graphs $K_5$ and $K_{3,3}$.
In general, while there will be a finite set of obstructions for embeddings into any surface, the obstruction set may be quite large.
For example, it is known that for the torus there are at least sixteen thousand obstructions (see~\cite{GMC}) and it is expected that the number grows quickly with genus after that.

Robertson, Seymour, and Thomas~\cite{RST} proved that the obstruction set for graphs with a linkless embedding (an embedding in $\mathbb{R}^3$ that contains no nontrivial link) is the seven graphs obtained from $K_6$ by \define{Triangle-Y} and \define{Y-Triangle} moves, see Figure~\ref{figTY}.
In addition to $K_6$, this set of seven graphs, called the Petersen Family, also contains $K_{1,3,3}$ and the Petersen graph.
It was shown in \cite{CG} that $K_7$ is an obstruction for knotless embedding (embeddings in $\mathbb{R}^3$ with no non-trivial knot).
Although the obstruction set for graphs with knotless embeddings is not known, recent work of Goldberg, Mattman, and Naimi indicates it is quite large \cite{GMN} and that completing the set may be beyond current theory.

\begin{figure}[htb]
  \labellist{}
  \small\hair 2pt
  \pinlabel $a$ at 46 -6
  \pinlabel $b$ at 2 90
  \pinlabel $c$ at 90 90
  \pinlabel $a$ at 272 -6
  \pinlabel $b$ at 228 90
  \pinlabel $c$ at 316 90
  \pinlabel $v$ at 265 43
  \pinlabel triangle-Y at 160 68
  \pinlabel Y-triangle at 160 33
  \endlabellist{}
  \centering
  \includegraphics[scale=0.6]{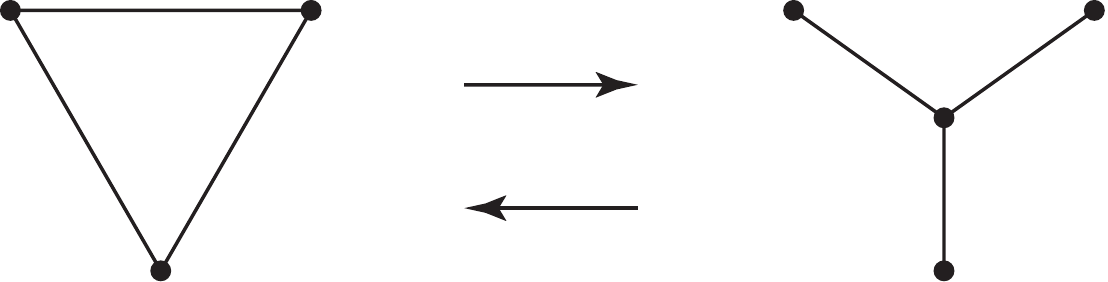}
  \caption{The $\TY$ and $\YT$ moves.}\label{figTY}
\end{figure}

While the Graph Minor Theorem guarantees finite obstruction sets, we often have no way of bounding, or even estimating that finite number.
In the case of linkless embedding, the obstructions belong to a single family related by Triangle-Y and Y-Triangle moves.
Similarly, for knotless embedding, all but three of the known obstructions fall into one of four families~\cite{FMMNN}.
Bounding the complete set of knotted obstructions is beyond us for the moment, but a method for estimating the size of graph families is a positive step in that direction. 

As in Figure~\ref{figTY}, a \define{Triangle-Y} or $\TY$ move deletes the edges of a 3-cycle $abc$ in graph $G$ and adds a new vertex $v$ and the three edges $av$, $bv$, $cv$ to create a new graph $H$.
We call the inverse operation, from $H$ to $G$, a \define{Y-Triangle} or $\YT$ move.
Let $\#E(G)$ be the number of edges in the graph $G$, called the graph's \emph{size}, and note that these moves do not change graph size: $\#E(G) = \#E(H)$.
If a graph $H$ is obtained from $G$ by a sequence of zero or more $\YT$ and $\TY$ moves, we say $H$ and $G$ are \define{cousins}.
The set of cousins of a graph $G$ is known as $G$'s \define{family}, denoted $\F(G)$.
Every graph in $\F(G)$ has the same size as $G$. In the current paper we seek to estimate $|\F(G)|$, the number of graphs in $\F(G)$, which we will call $G$'s \define{family size}.

Since the $\TY$ and $\YT$ moves preserve important topological properties of a graph, these families are significant in the study of spatial graphs, or embeddings of graphs in $\R^3$.
For example, $\YT$ preserves planarity, and more generally, preserves $n$-apex provided the vertex $v$ is not part of an apex set, see~\cite{MP}.
As in that paper, we say that a graph is \define{$n$-apex} if it can be made planar by deletion of $n$ or fewer vertices.
Sachs~\cite{S} observed that $\YT$ preserves linkless embeddings, and, essentially the same argument, shows that it also preserves knotless embeddings.

As mentioned above, linkless embedding is characterized by the family of the Petersen graph.
Sachs~\cite{S} saw that the graphs in the Petersen family are obstructions for linkless embedding and conjectured that those seven constituted a complete list of obstructions; this was confirmed in~\cite{RST}.
In addition to the Petersen graph, the complete graph $K_6$ and the complete tripartite graph $K_{1,3,3}$ are in this family, so we can denote it as $\F(K_6)$ or $\F(K_{1,3,3})$.
Since $\F(K_6)$ is closed under $\TY$ moves, that move also preserves linkless embeddings, see~\cite{FN}.

On the other hand, Flapan and Naimi~\cite{FN} pointed out that, in general, $\TY$ does not preserve knotless embeddings. 
Nonetheless, almost all of the 264 known obstructions belong to the four families $\F(K_7)$,
$\F(K_{1,1,3,3})$, $\F(E_9+e)$, and $\F(G_{9,28})$, see \cite{FMMNN}.

In \cite{GMN}, the authors note that family size shows considerable variation.
For example, they contrast $G_{14,25}$, a graph of order 14 and size 25, whose family size is at least several hundreds of thousands, with an obstruction discovered by Foisy, of order 13 and size 30, whose family size is one.
In the current paper, we investigate what can be said if we restrict attention to the families of complete multipartite graphs.
We have already seen how the families of $K_6$, $K_7$, and $K_{1,1,3,3}$ are important in characterizing linkless and knotless embeddings.
These ideas are generalized in \cite{MP} where the authors present evidence that the graphs in $\F(K_n)$ and $\F(K_{1^n,3^2})$ are obstructions for the $n$-apex property.
Here, $K_{1^n,3^2}$ denotes the complete multipartite graph with two parts of three vertices each and a further $n$ parts, each of a single vertex.

In summary, the families of complete multipartite graphs have already shown their utility in the study of spatial graphs.
Moreover, since any graph can be made complete multipartite through the addition of edges, information about the family size of complete multipartite graphs can be parlayed into estimates for other graphs. 
For example, we've mentioned $E_{9+e}$ and $G_{9,28}$ as important obstructions for knotless embedding.
The family size of $E_{9+e}$ is 110, which is similar to the size 71 for the graph $K_{3^3}$ that has five more edges.
For $G_{9,28}$, whose family size is 1609, we can compare with $K_{1,2^4}$, which has four extra edges and family size 1887.

\section{Results}

Our main observation is that the sizes of families of complete multipartite graphs stabilize as the number of vertices in any fixed part increases.

\begin{thm}\label{thmmain}
  Let $1 \leq a_1 \leq \cdots \leq a_n$ and $e = \#E(K_{a_1, \ldots, a_{n-1}})$. If $a_1 + \cdots + a_{n-1} > 6$ and $a_n \geq e$, then
  \[
      \abs*{\mathcal{F} (K_{a_1, \ldots, a_n})} = \abs*{\mathcal{F} (K_{a_1, \ldots, a_{n-1}, e})}.
  \]
\end{thm}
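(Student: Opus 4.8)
The plan is to prove something slightly stronger: that the family size is constant for all $a_n \geq e$, from which the stated equality with the $a_n = e$ case follows immediately. Concretely, I would fix the first $n-1$ parts and show that for every $k \geq e$ one has $\abs{\F(K_{a_1, \ldots, a_{n-1}, k})} = \abs{\F(K_{a_1, \ldots, a_{n-1}, k+1})}$; telescoping down to $k = e$ then gives the theorem. Throughout I would exploit that the vertices of the last part $P_n$ are mutually nonadjacent twins: each is adjacent to exactly the $m := a_1 + \cdots + a_{n-1}$ vertices outside $P_n$. Along any sequence of $\TY$ and $\YT$ moves starting from $K_{a_1, \ldots, a_{n-1}, k}$, call a vertex of $P_n$ \emph{passive} if no move ever has it as a triangle vertex of a $\TY$ move, as a neighbor of the center of a $\YT$ move, or as the center deleted by a $\YT$ move; otherwise call it \emph{active}. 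A passive vertex retains its original degree-$m$ neighborhood throughout the sequence.

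The core of the argument is a pair of maps between $\F(K_{\ldots, k})$ and $\F(K_{\ldots, k+1})$. Given a family member $G'$ of the smaller graph together with a realizing move sequence, the same sequence is legal in $K_{\ldots, k+1}$ because the extra twin simply stays passive (it is never referenced by any move); this produces $G'$ with one additional degree-$m$ twin adjoined, defining a map $\Phi$. Conversely, if $H \in \F(K_{\ldots, k+1})$ has a passive vertex $w$, then deleting $w$ and its edges from every graph in a realizing sequence leaves every $\TY$ and $\YT$ move intact, since none of them involved $w$; this exhibits $H - w$ as a member of $\F(K_{\ldots, k})$ and defines $\Psi$. I would then check that $\Phi$ and $\Psi$ are mutually inverse on isomorphism classes: because all passive twins have identical neighborhoods, the choice of which twin is added or deleted does not affect the isomorphism type, so composing the two operations returns the original graph up to isomorphism.

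For this bijection to be well defined I need $\Psi$ to be total, that is, every member of $\F(K_{\ldots, k+1})$ must contain a passive vertex of $P_n$. This reduces to the key lemma that I expect to be the main obstacle: in any family member the number of active vertices of $P_n$ is at most $e$. Granting it, a graph with $k+1 \geq e+1$ twins has at least one passive vertex, so $\Psi$ applies and the bijection holds for every $k \geq e$. The natural way to bound the active vertices is a charging argument: the first time a twin is activated it sits in a triangle whose opposite edge lies among the first $n-1$ parts, and $K_{a_1, \ldots, a_{n-1}}$ has only $e$ such edges. The difficulty is that an edge deleted by a $\TY$ move can be recreated by a later $\YT$ move, and that triangles may involve vertices created during the process rather than original ones, so a naive one-edge-per-activation count is not obviously injective; making the charging rigorous, likely via a carefully chosen potential or a direct analysis of how the large part interacts with newly created vertices, is the crux.

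Finally, the hypothesis $m = a_1 + \cdots + a_{n-1} > 6$ should enter precisely here, to exclude the exceptional small configurations. When $m > 6$ every twin has degree greater than $6$, so no twin can be whittled down to degree $3$ and collapsed by a $\YT$ move, and the large part cannot degenerate into the $K_6$- or $K_{3,3}$-type behavior that would let would-be passive vertices participate without bound; this is what keeps the active count tied to $e$ and the passive twins genuinely interchangeable. With the counting lemma in hand, constancy of the family size on $[e, \infty)$ follows, and in particular it equals $\abs{\F(K_{a_1, \ldots, a_{n-1}, e})}$.
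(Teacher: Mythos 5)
Your overall architecture---a twin-adding/twin-deleting correspondence between consecutive families, made total by bounding the number of ``active'' last-part vertices by $e$---is essentially the paper's, but the proposal has a genuine gap exactly where you flag it: the key lemma bounding the active vertices is never proved, and the charging argument you sketch really does face the obstructions you name (edges resurrected by $\YT$ moves, triangles through vertices created mid-sequence). The missing idea is that the hypothesis $a_1 + \cdots + a_{n-1} > 6$ eliminates these obstructions wholesale, rather than merely ``excluding exceptional small configurations.'' This is the paper's Lemma~\ref{lem6sum}: every original vertex of $K_{a_1,\ldots,a_n}$ has degree greater than $6$ (the last part has degree exactly $a_1+\cdots+a_{n-1}$, the other parts at least that, using $a_n \geq a_i$), and a $\TY$ move at a triangle through a vertex $x$ trades two of $x$'s edges for one edge to the new degree-three vertex; since that new vertex has pairwise nonadjacent neighbors, it never lies in a triangle, so the edge it receives is never again usable by a $\TY$ move. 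Hence $\TY$ moves can at most halve the degree of any original vertex, which therefore never drops to degree three. Consequently the only degree-three vertices anywhere in the family are the trivial ones, the only $\YT$ moves available are reversals of earlier $\TY$ moves, and the full family coincides with the descendant family: $\F = \Fd$.

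Once you have that reduction, your key lemma is immediate and needs no potential function: every member of the family is realized by $\TY$ moves at pairwise edge-disjoint triangles of the \emph{original} graph; each such triangle has at most one vertex in the last part, hence contains an edge of $K_{a_1,\ldots,a_{n-1}}$, so the sequence has length at most $e$ and at most $e$ last-part vertices are ever active. Your maps $\Phi$ and $\Psi$ then collapse to the paper's bijection in Theorem~\ref{thmmpstab} (delete unused last-part twins, or adjoin new ones adjacent to everything outside the last part). So the route is the same as the paper's; what is missing is precisely the reduction $\F = \Fd$, without which the counting step is not just unfinished but genuinely delicate---$\YT$ moves at non-trivial degree-three vertices can create triangles absent from the original graph, which actually happens when the degree hypothesis fails, e.g.\ in $\F(K_{1,3,c})$ in the paper's Lemma~\ref{lem3cases}.
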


For tripartite graphs, we verify stabilization even when the sum of the parts does not exceed six, with one exception.

\begin{thm}\label{thmtri}
  Let $1 \leq a \leq b \leq c$, $(a,b)  \neq (1,2)$, and $c \geq d = \max(4,ab)$.
  Then $\abs*{\F(K_{a,b,c})} = \abs*{\F(K_{a,b,d})}$.
\end{thm}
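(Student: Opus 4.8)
The plan is to separate the argument according to the size of $a+b$. If $a+b>6$ then $ab\ge 6$, so $d=\max(4,ab)=ab=\#E(K_{a,b})=e$, and the statement is exactly the $n=3$ instance of Theorem~\ref{thmmain}: for every $c\ge e=d$ it yields $\abs*{\F(K_{a,b,c})}=\abs*{\F(K_{a,b,e})}=\abs*{\F(K_{a,b,d})}$. This leaves only the pairs with $a\le b$, $a+b\le 6$, and $(a,b)\ne(1,2)$, namely
\[
  (a,b)\in\{(1,1),(1,3),(1,4),(1,5),(2,2),(2,3),(2,4),(3,3)\},
\]
a finite list on which $d$ takes the explicit values $4,4,4,5,4,6,8,9$. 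I would treat these eight base pairs by a single uniform scheme, falling back on explicit computation where the scheme needs anchoring.

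For a fixed base pair I would prove $\abs*{\F(K_{a,b,c})}=\abs*{\F(K_{a,b,d})}$ by induction on $c\ge d$, the inductive step being a size-preserving bijection $\F(K_{a,b,c})\to\F(K_{a,b,c+1})$. The bijection is organized around a notion of \emph{spectator}: a vertex of the last part that sits inside a cousin exactly as an untouched last-part vertex of the original graph sits inside $K_{a,b,c}$, so that deleting it returns a cousin with one fewer last-part vertex. The forward map adjoins a fresh spectator; because every $\TY/\YT$ sequence building a cousin of $K_{a,b,c}$ can be replayed verbatim inside $K_{a,b,c+1}$ while the extra vertex is left untouched, the image is again a cousin and the map is injective. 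The reverse map deletes a spectator, which is well defined on isomorphism classes since all spectators are interchangeable; the two maps invert one another, so the only thing in doubt is that the reverse map is everywhere defined.

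Everywhere-definedness of the reverse map — that for $c+1>d$ every cousin of $K_{a,b,c+1}$ contains a spectator — is the crux, and it is what fixes the threshold $d=\max(4,ab)$. The value $ab$ reflects a bound on how many last-part vertices can be non-spectators in a single cousin: the triangles of $K_{a,b,c}$ are exactly the unions of an edge of $K_{a,b}$ with a last-part vertex, and one aims to charge the active (non-spectator) last-part vertices injectively to edges of $K_{a,b}$, so that their number never exceeds $\#E(K_{a,b})=ab$; once $c+1>ab$ a spectator must then survive. The floor $4$ covers the two pairs $(1,1)$ and $(1,3)$ with $ab<4$, where last-part vertices have degree $2$ and $4$ and the counting bound is too crude, so for these I would instead verify stability at $c=d=4$ and the inductive step by direct enumeration. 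The hard part will be exactly this spectator-survival claim in the low-degree regime: here, unlike the $a+b>6$ range covered by Theorem~\ref{thmmain}, last-part vertices can have degree as low as $2$, so after several moves they may be pulled into newly created triangles or even joined to one another, and excluding a configuration that keeps all $c+1$ of them active — and so blocks every deletion — is where the real work, and the need for the case analysis, lies. Producing the family sizes $\abs*{\F(K_{a,b,d})}$ for the eight base pairs to anchor the induction I would do computationally, by exploring the $\TY/\YT$ graph directly.
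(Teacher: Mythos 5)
Your reduction of the case $a+b>6$ to Theorem~\ref{thmmain} is sound and non-circular (the multipartite section of the paper is self-contained), and your overall scheme --- adding and deleting interchangeable part-$C$ vertices, anchored by computation at $c=d$ --- is the same mechanism the paper uses. But there is a genuine gap: for the eight remaining pairs you never prove the one claim that carries the whole theorem, namely that every cousin of $K_{a,b,c}$ with $c>d$ contains a spectator (nor the companion claim needed for your forward map, that a building sequence never performs a $\YT$ move at a vertex adjacent to the added spectator, i.e.\ at an original $A\cup B$ vertex whose degree has fallen to three). Your proposed charging argument --- active part-$C$ vertices injected into the edges of $K_{a,b}$ --- is valid only for descendants, that is, for graphs reached by $\TY$ moves alone; this is exactly Theorem~\ref{thmFstab} of the paper. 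It breaks down the moment a $\YT$ move is made at a non-trivial degree-three vertex, because such a move creates edges that do not exist in $K_{a,b,c}$, and those edges spawn triangles through part-$C$ vertices that do not project to edges of $K_{a,b}$. Concretely, in $\F(K_{1,3,c})$: after $\TY$ at $(v,w_1,x_1)$ and $\YT$ at the resulting degree-three vertex $x_1$, the edge $w_2w_3$ appears inside part $B$, and now every remaining part-$C$ vertex $x_j$ lies in a triangle $(x_j,w_2,w_3)$. Indeed the bound ``at most $ab$ active part-$C$ vertices'' is not merely unproven but false as a stabilization threshold: $\F(K_{1,3,3})$ is the Petersen family, which contains the Petersen graph, a cousin with no spectator at all, and $\abs*{\F(K_{1,3,3})}=7\neq 6=\abs*{\F(K_{1,3,4})}$. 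So the spectator-survival claim genuinely depends on the family's structure, and deferring it to ``explicit computation where the scheme needs anchoring'' does not work either: the inductive step quantifies over all $c$, so no finite enumeration at $c=d$ can establish it without a structural argument.

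For comparison, the paper fills this hole with two ingredients your plan lacks. First, degree lemmas (Lemmas~\ref{lemFdeqF} and~\ref{lemb3}): if $a+b>6$ or $b>3$, no non-trivial degree-three vertex can ever arise, so $\F=\Fd$ and Theorem~\ref{thmFstab} (your charging argument) finishes the proof; this disposes of $(1,4)$, $(1,5)$, $(2,4)$, which you unnecessarily route through the hard scheme. Second, for the five genuinely exceptional pairs $(1,1),(1,3),(2,2),(2,3),(3,3)$, the paper classifies the Y-free cousins --- by hand for the first three (Lemma~\ref{lem3cases}), by computer for the last two (Lemmas~\ref{lem33case} and~\ref{lem23case}, e.g.\ $\F(K_{3,3,9})=\Fd(K_{3,3,9})\cup\Fd(G_{17})\cup\Fd(G_{19})\cup\Fd(G_{21})$) --- and then runs the add/delete-a-spectator bijection separately inside each $\Fd(G_i)$, checking for each Y-free graph how many edge-disjoint triangles can meet its surviving part-$C$ vertices. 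A statement of this shape --- a classification of the $\YT$-maximal cousins for each base pair, with a per-graph triangle count --- is precisely what your induction step needs and what is missing from the proposal.
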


For bipartite graphs, the family size is generally one and it is also relatively small for $K_{1,b,c}$.

\begin{thm}\label{thmbip}
  For $K_{x,y}$, if $x \neq 3$ and $y \neq 3$, then $\abs*{\mathcal{F}(K_{x,y})} = 1$.
\end{thm}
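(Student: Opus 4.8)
The plan is to show that \emph{no} $\TY$ or $\YT$ move can be applied to $K_{x,y}$ at all, so that its family collapses to the single graph $K_{x,y}$ itself. Recall that a $\TY$ move requires a $3$-cycle on which to act, while a $\YT$ move requires a vertex of degree exactly $3$. The strategy is to verify that, under the hypotheses $x \neq 3$ and $y \neq 3$, the graph $K_{x,y}$ has neither feature, and then to conclude that a graph admitting no outgoing moves has family size one.

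First I would note that $K_{x,y}$ is bipartite and hence contains no odd cycles; in particular it is triangle-free, so there is no $3$-cycle available and no $\TY$ move is applicable. Next I would read off the degree sequence: each of the $x$ vertices in one part has degree $y$, and each of the $y$ vertices in the other part has degree $x$, so the only degrees occurring in $K_{x,y}$ are $x$ and $y$. Since $x \neq 3$ and $y \neq 3$ by hypothesis, $K_{x,y}$ has no vertex of degree $3$, so no $\YT$ move is applicable either. These two structural observations together show that neither move type can be performed on $K_{x,y}$.

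To finish, I would invoke the definition of a family: every cousin of $K_{x,y}$ is obtained by a sequence of moves, and the first move in any nonempty such sequence must be applicable to $K_{x,y}$ itself. As both move types have been excluded, the only admissible sequence is the empty one, whence $\F(K_{x,y}) = \{K_{x,y}\}$ and $\abs*{\F(K_{x,y})} = 1$. There is no genuinely hard step here—the result is immediate from the degree count and triangle-freeness—so the only point meriting care is the logical observation that inapplicability of both moves at the starting graph forces the family to be a singleton. I would also remark that the restriction $x, y \neq 3$ is exactly what rules out degree-$3$ vertices, and that it is sharp: a graph such as $K_{3,y}$ does possess degree-$3$ vertices and so admits $\YT$ moves, yielding a strictly larger family.
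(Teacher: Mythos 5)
Your argument is correct and is exactly the paper's proof, just written out in full: the paper's one-line proof ("No $\TY$ or $\YT$ moves are possible, so this is clear") is precisely your two observations that bipartiteness rules out triangles and that the hypotheses $x,y\neq 3$ rule out degree-three vertices. Nothing further is needed.
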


\begin{thm}\label{thmK1bc}
  Let $6 \leq b \leq c$. Then $\abs*{\mathcal{F} (K_{1,b,c})} = 1 + b$.
\end{thm}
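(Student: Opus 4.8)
The plan is to show that the only available moves pull triangles out through the unique vertex in the part of size $1$, that these moves are essentially forced, and that the family is therefore a path of $b+1$ graphs indexed by the number of triangles pulled out. Write $u$ for the single vertex, and $x_1,\dots,x_b$, $y_1,\dots,y_c$ for the two larger parts. First I would identify the triangles and the low-degree vertices. In a complete tripartite graph the three vertices of a triangle lie in distinct parts, so every triangle has the form $u x_i y_j$; there are exactly $bc$ of them and each contains $u$. Since $b,c\geq 6$, the degrees $b+c$, $1+c$, $1+b$ of $u$, the $x_i$, and the $y_j$ all exceed $3$, so $K_{1,b,c}$ has no vertex of degree $3$ and admits no $\YT$ move; the only moves are $\TY$ moves on triangles $u x_i y_j$.

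Next I would analyze the effect of a $\TY$ move. Pulling out $u x_i y_j$ deletes the edges $u x_i$, $u y_j$, $x_i y_j$ and adds a degree-$3$ vertex $v$ adjacent to the now pairwise non-adjacent vertices $u, x_i, y_j$. I would verify that this creates no new triangle (the neighbours of $v$ are independent) and no new degree-$3$ vertex: after any sequence of such moves the surviving triangles are exactly the $u x_i y_j$ with $x_i$ and $y_j$ still joined to $u$, and the only degree-$3$ vertices are the added $v$'s, whose three neighbours remain independent. Consequently, at every stage the only possible moves are a $\TY$ move on a triangle through $u$ (which removes one $x_i$ and one $y_j$ from $u$'s neighbourhood) and the inverse $\YT$ move undoing one of the added $v$'s. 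Thus a graph in the family with $k$ added vertices corresponds to a choice of $k$ cells $(x_{i_\ell}, y_{j_\ell})$ with distinct rows and distinct columns, i.e.\ a partial permutation matrix in the $b\times c$ grid.

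I would then count. Since $S_b\times S_c$ acts by automorphisms and moves any size-$k$ partial permutation matrix to the diagonal one, the graph obtained depends only on $k$; call it $G_k$, with $G_0 = K_{1,b,c}$. A further $\TY$ move exists precisely when some $x_i$ and some $y_j$ are still adjacent to $u$, i.e.\ when $k<b$ (here the hypothesis $b\leq c$ is used), so $k$ ranges over $0,1,\dots,b$ and the family is the path $G_0 - G_1 - \cdots - G_b$. Finally, $G_k$ has $1+b+c+k$ vertices, so the $G_k$ are pairwise non-isomorphic and $\abs*{\F(K_{1,b,c})} = b+1$.

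The step I expect to be the main obstacle is the bookkeeping in the middle: verifying rigorously that no sequence of moves ever produces an unexpected triangle or an unexpected degree-$3$ vertex, so that $\YT$ moves can only undo previous $\TY$ moves, and confirming that the reachable graph depends only on the count $k$ through the partial-permutation-matrix normalization. Pinning down that the process halts exactly at $k=b$, rather than $k=c$, is where $b\leq c$ enters decisively.
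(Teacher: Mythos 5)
Your proof is correct and follows essentially the same route as the paper's: both arguments show that every degree-three vertex produced is trivial (so $\YT$ moves only undo earlier $\TY$ moves, i.e.\ $\F = \Fd$), then observe that every triangle passes through the unique part of size one, so edge-disjointness forces distinct vertices from the two larger parts and, up to the symmetry of $K_{1,b,c}$, a descendant is determined by the number $k \in \{0,1,\dots,b\}$ of triangles pulled out. The only difference is one of economy: the paper cites its earlier degree-halving lemma for the triviality of degree-three vertices, whereas you re-derive that fact directly and spell out the partial-permutation normalization, which the paper compresses into ``up to symmetry.''
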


%

For $K_{2,b,c}$ we also have a lower bound in terms of partitions, which closely follows the observed growth of $\abs*{\F(K_{2,b,c})}$.
Let $P(x,y,z)$ denote the set of partitions of $z$ into two parts, the first bounded by $x$ and the second by $y$:

\[
  P(x,y,z) = \{ \, (m,n) : 0 \leq m \leq x, 0 \leq n \leq y, m + n = z \, \}.
\]

Define $g(b,c)$ by

\[
  g(b,c) = 5 + \sum_{i=2}^b \sum_{j=0}^i \left( \abs*{P(i, b-i, j)} \cdot \abs*{P(i, c-i, j)} \right)\,.
\]
 
\begin{thm}\label{thmK2bc}
  If $c > b \geq 3$, then $g(b,c) \leq \abs*{\F(K_{2,b,c})}$.
\end{thm}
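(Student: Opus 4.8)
\section*{Proof proposal for Theorem \ref{thmK2bc}}

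The plan is to establish the lower bound by exhibiting $g(b,c)$ pairwise non-isomorphic cousins of $K_{2,b,c}$, each obtained by a sequence of commuting $\TY$ moves, and then arguing that these graphs are distinguished by an isomorphism invariant read off from their degree data. First I would analyze the triangles of $K_{2,b,c}$. Writing $A = \{a_1, a_2\}$, $B$, $C$ for the three parts, every triangle meets each part once, and a $\TY$ move on a triangle $(a, y, z)$ (with $y \in B$, $z \in C$) deletes the edges $ay$, $az$, $yz$ and attaches a new degree-$3$ vertex adjacent to $a$, $y$, $z$. Two such triangles are edge-disjoint---so that the corresponding $\TY$ moves commute and produce a graph independent of the order of the moves---exactly when the triangles through a common apex form a matching of $K_{b,c}$ and no $B$--$C$ edge is used twice. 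Thus each configuration is encoded by a pair of edge-disjoint matchings $M_1, M_2 \subseteq K_{b,c}$, where $M_t$ collects the triangles pulled through $a_t$.

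Next I would organize these configurations up to isomorphism. Using the symmetry that exchanges $a_1$ and $a_2$, I may assume $\abs*{M_2} = i \geq j = \abs*{M_1}$. The configurations with $i \leq 1$---namely the original graph, the single-$\TY$ cousin, and the three ways that two edge-disjoint triangles through distinct apexes can share a $B$-vertex, share a $C$-vertex, or be vertex-disjoint---account for the constant $5$. For $i \geq 2$, I would show that the isomorphism type is pinned down by four numbers: $i$, $j$, the number $m$ of $M_1$-edges whose $B$-endpoint lies in the support of $M_2$, and the number $m'$ of $M_1$-edges whose $C$-endpoint lies in the support of $M_2$. These are recoverable from the degree multiset: a $B$-vertex touched by both matchings has degree $c$, one touched once has degree $c+1$, and an untouched one has degree $c+2$, so $i$ and $m$ are read from the $B$-side degrees (and dually $i, m'$ from the $C$-side), while $j$ is the number of new degree-$3$ vertices adjacent to the lower-degree apex. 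The admissible pairs $(m, j-m)$ are precisely those of $P(i, b-i, j)$ and, independently, $(m', j-m')$ range over $P(i, c-i, j)$, which yields the product and the double sum defining $g(b,c)$.

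The main obstacle is the non-isomorphism argument, and in particular separating the new degree-$3$ vertices from genuine part-vertices of the same degree. Since $c > b \geq 3$, a $C$-vertex touched twice can itself have degree $b = 3$, coinciding with the degree of a new vertex; here I would fall back on adjacency structure, noting that a new vertex is adjacent to exactly one apex, one $B$-vertex, and one $C$-vertex, a pattern no part-vertex exhibits, and that the strict inequality $c > b$ prevents any automorphism from interchanging the roles of $B$ and $C$. It then suffices to verify that distinct tuples $(i,j,m,m')$ yield distinct degree data---hence non-isomorphic graphs---and that every admissible tuple is realized by some pair of matchings. Counting the realizable tuples then produces exactly $g(b,c)$ pairwise non-isomorphic cousins and establishes the bound.
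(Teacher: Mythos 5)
Your proposal follows essentially the same route as the paper's proof: encode each cousin by the four parameters $(i,j,s,t)$ (your $(i,j,m,m')$), count the admissible tuples via the partition sets $P(i,b-i,j)$ and $P(i,c-i,j)$ together with the five small configurations, and show distinct tuples give non-isomorphic graphs by reading the tuple off from degree counts refined by adjacency to the two apex vertices. The only real divergence is at the degree coincidences: the paper assumes $b>3$ in the general argument (checking $b=3$, $4 \leq c \leq 6$ computationally) and treats the $c=b+1$, $c=b+2$ overlaps explicitly, whereas you propose to resolve all coincidences uniformly by adjacency structure (a trivial degree-three vertex is adjacent to an apex, a part vertex of degree three is not) --- this works and is slightly more self-contained, but it is exactly the step your sketch leaves least developed.
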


Although, the family sizes of complete multipartite graphs tend to stabilize, we've encountered three types of graphs that do not follow this pattern.
For these we propose instead estimates of the family sizes supported by computational observations.

\begin{quest}\label{q1}
  Does $|\F(K_n)|$ grow as
	\[
		f(n) = \frac{6}{5} {\left( 2 \pi \right)}^{3/2} e^{{(n-7)}^2/2}\,?
	\]
\end{quest}

Table~\ref{fig:Kgrowth} gives the estimated and actual values of $\abs*{\F(K_n)}$ for $8 \leq n \leq 12$.

\begin{table}[ht]
	\centering
	\begin{tabular}{ccc}
		\toprule
		$n$ & $f(n)$ & Actual \\
		\midrule
		8 & 31.2 & 32 \\
		9 & 139.7 & 163 \\
		10 & 1701.3 & 1681 \\
		11 & 56,338.7 & 56,461 \\
		12 & 5,071,450 & 5,002,315 \\
		\bottomrule
	\end{tabular}
	\caption{Estimates for the Family Size of $K_n$ Versus Actual\label{fig:Kgrowth}}
\end{table}

\begin{quest}\label{ques2}
  Is $\frac{8}{3} e^{\frac35y} > \abs*{\F(K_{3,y+3})}$ for $y \geq 4$?
\end{quest}

\begin{quest}\label{ques3}
  Is $\frac{16}{3} e^{\frac23c} < \abs*{\F(K_{1,2,c+3})}$ for $c \geq 1$?
\end{quest}

In the next section, we introduce some additional terminology and prove Theorem~\ref{thmtri}.
In Section 3, we prove our main theorem, Theorem~\ref{thmmain}.
Section 4 is devoted to the three families that do not appear to stabilize, including motivation for the estimates given as part of our three questions.
We prove Theorems~\ref{thmbip}, \ref{thmK1bc}, and \ref{thmK2bc} in Section 5, where we also state a conjecture for multipartite graphs.

%
%
\section{Families of tripartite graphs stabilize.}

Let $K_{a,b,c}$ denote the complete $a,b,c$ tripartite graph, where $1 \leq a \leq b \leq c$. Let $\Fd (G)$ denote the \emph{family of descendants} of $G$, the graphs that can be obtained from graph $G$ by a sequence of $\TY$ moves, along with $G$ itself.
We call an element of $\Fd(G)$ a \emph{descendant} of $G$.
We argue that, with the exception of $(a,b) = (1,2)$, the sizes of these two families stabilize for $c \geq ab$. 
We conclude this section with a proof of Theorem~\ref{thmtri}.

Let $G = K_{a,b,c}$ and $\{A, B, C\}$ be the partition of $V(G)$ with $\abs*{A} = a$, $\abs*{B} = b$, and $\abs*{C} = c$.
The triangles of $K_{a,b,c}$ are $(v,w,x)$ with $v \in A$, $w \in B$, and $x \in C$ and every such triple of vertices gives a triangle.
Let $H$ be the child of $G$ born of a $\TY$ move at $(v,w,x)$.
Then $V(H) = V(G) \cup \{y\}$, where $y$ is a degree three vertex with neighborhood $N(y) = \{v,w,x\}$.
We will refer to $y$ as a \define{trivial degree three vertex} since a $\YT$ move at $y$ simply recovers the graph $G$ and reverses the $\TY$ move that brought us to $H$ in the first place.
Since none of the edges of $(v,w,x)$ remain in $H$, $y$ is not part of a triangle in $H$.

More generally, any descendant $H$ of $G$ is born of a sequence of $\TY$ moves at edge-disjoint triangles $(v_1,w_1,x_1), \ldots, (v_n, w_n, x_n)$.
These result in a sequence of trivial vertices $y_1, \ldots, y_n$ none of which are vertices of a triangle in $H$.
Conversely, $\TY$ moves at any set of edge-disjoint triangles in $G$ produces one of its descendants.
\begin{lemma} \label{lemFdeqF}
  Let $1 \leq a \leq b \leq c$. If $a+b > 6$, then
  \[
    \abs*{\Fd (K_{a,b,c})} = \abs*{\F (K_{a,b,c})} \,.
  \]
\end{lemma}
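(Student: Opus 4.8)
The plan is to prove the reverse inclusion $\F(K_{a,b,c}) \subseteq \Fd(K_{a,b,c})$, since the forward inclusion is immediate from the definitions. I would obtain this by showing that $\Fd(G)$, with $G = K_{a,b,c}$, is closed under both $\TY$ and $\YT$ moves. As $\Fd(G)$ contains $G$ and $\F(G)$ is generated from $G$ by such moves, induction on the length of a move sequence then forces $\F(G) \subseteq \Fd(G)$.

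Closure under $\TY$ is the easy direction. As recalled above, a descendant $H$ arises from a family of edge-disjoint triangles $T_1, \dots, T_n$ of $G$, and its only triangles are those triangles of $G$ that avoid all of the deleted edges (the trivial vertices $y_i$ lie in no triangle). Performing a further $\TY$ move at such a surviving triangle $T$ is therefore the same as expanding the edge-disjoint family $T_1, \dots, T_n, T$, so the result is again a descendant of $G$.

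The heart of the argument, and the step I expect to be the main obstacle, is closure under $\YT$. A $\YT$ move can only be performed at a degree-three vertex whose three neighbors are pairwise non-adjacent, since otherwise the edge count would not be preserved, so I would first locate all degree-three vertices of a descendant $H$. The only vertices introduced in forming $H$ are the trivial vertices $y_1, \dots, y_n$, each of degree three. For an original vertex $v$, each expanded triangle through $v$ deletes two edges at $v$ and adds one edge to the corresponding $y_i$, lowering $\deg(v)$ by exactly one, so $\deg_H(v) = \deg_G(v) - t(v)$, where $t(v)$ counts the $T_i$ through $v$. Because the $T_i$ are edge-disjoint, triangles through a fixed $v \in C$ use distinct edges to $A$ and distinct edges to $B$, whence $t(v) \le \min(a,b) = a$ and $\deg_H(v) \ge (a+b) - a = b$; the analogous count gives $\deg_H(v) \ge c \ge b$ for $v \in A \cup B$. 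Thus every original vertex has degree at least $b$ in $H$.

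Now the hypothesis enters: $a \le b$ together with $a + b > 6$ forces $b \ge 4$, so every original vertex has degree at least $4 > 3$, and the degree-three vertices of $H$ are exactly the trivial vertices $y_i$. Since $\TY$ moves never recreate a deleted edge, the three neighbors of each $y_i$ remain pairwise non-adjacent, so the only $\YT$ move available at $y_i$ simply reverses the $i$-th $\TY$ move and lands on the descendant obtained by expanding $T_1, \dots, \widehat{T_i}, \dots, T_n$. Every $\YT$ move from $H$ is therefore of this form and stays inside $\Fd(G)$, which completes the closure argument and hence the proof. The only delicate points to verify carefully are the edge-disjointness bound $t(v) \le a$ and the elementary implication $a+b>6 \Rightarrow b \ge 4$.
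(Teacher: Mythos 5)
Your proof is correct and follows essentially the same route as the paper: both arguments show that every degree-three vertex of a descendant of $K_{a,b,c}$ is trivial, so the only available $\YT$ moves reverse earlier $\TY$ moves and $\F(K_{a,b,c})$ closes up at $\Fd(K_{a,b,c})$. The only difference is cosmetic: you bound the degree drop via edge-disjointness (giving $t(v)\le\min(a,b)$ and hence degree at least $b\ge 4$), whereas the paper uses the slightly weaker observation that $\TY$ moves can at most halve a vertex's degree, so $a+b>6$ keeps every original vertex above degree three.
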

\begin{proof}
  The idea is that $\TY$ moves will produce only trivial degree three vertices; the only $\YT$ moves in this family simply reverse earlier $\TY$ moves.

  The vertices of least degree are those in the $C$ part, of degree $a+b$. Let $x \in C$.
  A $\TY$ move on a triangle at $x$ replaces two of its edges with one. This means that $\TY$ moves can at most halve the degree of $x$.
  Since $a+b > 6$, the degree of $x$ will never drop to three.
  As the vertices in the $A$ and $B$ parts have even higher degree, the only degree three vertices in a descendant of $K_{a,b,c}$ are the trivial ones.
\end{proof} 

\begin{lemma}\label{lemb3}
  Let $1 \leq a \leq b \leq c$. If $b > 3$, then
  \[
    \abs*{\Fd (K_{a,b,c})} = \abs*{\F (K_{a,b,c})} \,
  \]
\end{lemma}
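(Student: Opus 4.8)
The plan is to follow the template of Lemma~\ref{lemFdeqF}: since $\Fd(G) \subseteq \F(G)$ always holds for $G = K_{a,b,c}$, it suffices to show that every graph reachable from $G$ by $\TY$ and $\YT$ moves is in fact a descendant. As in that proof, the crux is to verify that the \emph{only} degree three vertices appearing in any descendant of $G$ are the trivial ones created by the $\TY$ moves. Once this is established, every available $\YT$ move merely reverses a previous $\TY$ (and every $\TY$ move extends the defining edge-disjoint set, since it acts on a surviving triangle), so $\Fd(G)$ is closed under both moves; as it contains $G$, it must equal $\F(G)$.

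The obstacle is that the crude estimate used in Lemma~\ref{lemFdeqF} no longer suffices: when $b = 4$ and $a = 1$, a vertex $x \in C$ has degree $a + b = 5$, and merely halving would permit its degree to fall to $3$. The fix is to exploit the tripartite structure to bound the number of edge-disjoint triangles through a fixed vertex more sharply. Recall that a descendant is recorded by a set of pairwise edge-disjoint triangles of $G$, and that a vertex's degree drops by exactly one for each triangle of the set containing it (two incident edges removed, one added to the new trivial vertex). Every triangle through $x \in C$ has the form $(v,w,x)$ with $v \in A$ and $w \in B$, consuming one edge from $x$ to $A$ and one edge from $x$ to $B$. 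Edge-disjointness forces the chosen $A$-neighbors to be distinct and the chosen $B$-neighbors to be distinct, so the triangles through $x$ form a partial matching between $x$'s $A$-neighbors and its $B$-neighbors. Their number is therefore at most $\min(a,b) = a$, and so the degree of $x$ in any descendant is at least $(a+b) - a = b$.

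The identical matching argument applies to the other two parts: a vertex of $A$ lies in at most $\min(b,c) = b$ triangles of an edge-disjoint set, leaving degree at least $(b+c) - b = c$, and a vertex of $B$ lies in at most $\min(a,c) = a$ of them, leaving degree at least $(a+c) - a = c$. Since $c \geq b > 3$, every original vertex retains degree at least $b \geq 4$ in every descendant, so no original vertex ever becomes a degree three vertex. Consequently the trivial vertices are the only degree three vertices, every $\YT$ move reverses a $\TY$ move, and $\Fd(G)$ is closed under the full set of moves, which yields $\abs*{\Fd (K_{a,b,c})} = \abs*{\F (K_{a,b,c})}$. The only genuine work is the matching bound of the middle paragraph; everything else follows the pattern already set in Lemma~\ref{lemFdeqF}.
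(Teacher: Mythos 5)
Your proof is correct, and while it shares the paper's overall framework---show that every degree three vertex in a descendant is trivial, so that $\YT$ moves can only reverse earlier $\TY$ moves---your key counting step is genuinely different from, and sharper than, the paper's. The paper first quotes Lemma~\ref{lemFdeqF} to dispose of $a+b>6$, leaving $a\in\{1,2\}$, and then treats those two cases ad hoc: for $a=1$, every triangle passes through the unique $A$-vertex, so a $B$- or $C$-vertex lies in at most one triangle of an edge-disjoint family; for $a=2$, in at most two; and the $A$-vertices are controlled by the cruder remark that $\TY$ moves can at most halve a degree. Your matching bound---edge-disjoint triangles through a fixed vertex must use pairwise distinct neighbors in each of the other two parts, hence number at most the minimum of the other two part sizes, each costing exactly one unit of degree---handles all vertices of all parts uniformly, giving degree at least $b$ for $C$-vertices and at least $c$ for $A$- and $B$-vertices, with no case split, no halving heuristic, and no appeal to the previous lemma. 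Indeed, since $a\le b$ and $a+b>6$ together force $b>3$, your single argument subsumes Lemma~\ref{lemFdeqF} as well. What the paper's route buys is economy, given that Lemma~\ref{lemFdeqF} is already in hand; what yours buys is a self-contained, exact bound on the number of edge-disjoint triangles through a vertex, which is the natural lemma to isolate if one wanted to reuse or generalize the argument.
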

\begin{proof}
  The previous lemma treats the case where $a+b > 6$, so we may assume $a < 3$.
  Again, we'll argue that the only degree three vertices are trivial.

  Suppose $a=1$ and let $v$ denote the unique vertex in that part of the graph.
  If $x$ is in $B$ or $C$, then, in a descendant of $K_{a,b,c}$ there is at most one $\TY$ move involving $x$ and so the degree of $x$ decreases by one at most.
  Since $3<b\leq c$, the degree of $x$ remains greater than three in the descendant.
  As for $v$, it starts with a degree exceeding six and is at most halved by $\TY$ moves.
  So, the only degree three vertices in a descendant are trivial.

  If $a = 2$, the argument is similar.
  As in the previous case, even after halving, vertices $v$ in the $A$ part have degree greater than three.
  As for a vertex $x$ in $B$ or $C$, it can be involved in at most two triangles.
  But the degree of $x$ is at least six, so removing two still leaves it above three.
\end{proof}

\begin{thm}\label{thmFstab}
  Let $1 \leq a \leq b$. If $c \geq ab$, then
  \[
    \abs*{\Fd (K_{a,b,c})} = \abs*{\Fd (K_{a,b,ab})} \,.
  \]
\end{thm}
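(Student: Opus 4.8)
The plan is to build an explicit bijection between the descendants of $K_{a,b,c}$ and those of $K_{a,b,c+1}$ whenever $c \ge ab$, and then induct on $c$ down to the base case $c = ab$. First I would record the combinatorial bound driving everything. As set up above, a descendant of $K_{a,b,c}$ is determined by the edge-disjoint set $S$ of triangles on which the $\TY$ moves are performed; writing each triangle as $(v,w,x)$ with $v\in A$, $w\in B$, $x\in C$, edge-disjointness forces the $AB$-edges $vw$ to be pairwise distinct across $S$. Since $K_{a,b,c}$ has exactly $ab$ edges between $A$ and $B$, this gives $\abs{S}\le ab$. Consequently at most $ab$ vertices of $C$ lie on a triangle of $S$; I will call the remaining vertices \emph{free}, as each lies on no triangle and retains its full neighborhood $A\cup B$. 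When $c\ge ab$ every descendant thus has at least $c-ab\ge 0$ free vertices, and when $c+1>ab$ at least one.

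Next I would define the two maps on isomorphism classes. Let $R$ delete a single free vertex (with its incident edges), sending a descendant of $K_{a,b,c+1}$ to the descendant of $K_{a,b,c}$ built from the same $S$, and let $A'$ adjoin a new vertex adjacent to all of $A\cup B$, sending a descendant of $K_{a,b,c}$ to one of $K_{a,b,c+1}$. Any two free vertices share the identical neighborhood $A\cup B$ and are nonadjacent, so they are interchangeable by an automorphism; hence neither the choice of which free vertex to delete nor the placement of the adjoined vertex affects the isomorphism class. It follows that $R$ and $A'$ are mutually inverse, giving $\abs{\Fd(K_{a,b,c})}=\abs{\Fd(K_{a,b,c+1})}$. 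Iterating from $c=ab$ then yields $\abs{\Fd(K_{a,b,c})}=\abs{\Fd(K_{a,b,ab})}$ for all $c\ge ab$, which is the theorem.

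The main obstacle is the well-definedness of these maps, which amounts to showing that \emph{free vertex} is an isomorphism-invariant notion: any isomorphism between two descendants must carry free vertices to free vertices (and hence carry the common neighborhood $A\cup B$ to itself, which is what $A'$ needs). I would attack this by a degree-and-adjacency analysis. Each vertex of $A$ or $B$ has degree at least $c\ge ab$; each vertex of $C$ has degree at most $a+b$; each trivial vertex has degree $3$ and lies on no triangle. A used vertex of $C$ loses one edge per triangle through it and gains adjacency to the corresponding trivial vertices, so it has degree strictly below $a+b$ and is adjacent to a degree-three vertex. Thus a free vertex is characterized invariantly as a vertex of degree exactly $a+b$ adjacent to no degree-three vertex, which settles the claim whenever these degree ranges are genuinely separated.

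The delicate part is precisely the configurations where the ranges abut—most notably $a=b=2$, where $ab=a+b$, and $a=1$—so that a vertex of $A$ or $B$ can itself have degree $a+b$. Here I would fall back on the adjacency criterion: such a high-degree vertex attaining degree $a+b$ must lie on enough triangles to be adjacent to a trivial vertex, whereas a free vertex is adjacent to none, so the two are still distinguished. I expect the remaining genuinely degenerate cases (such as $K_{1,1,c}$, where $\abs{S}\le 1$) to be small enough to verify directly, completing the invariance argument and with it the bijection.
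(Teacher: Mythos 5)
Your proposal has the same skeleton as the paper's proof of Theorem~\ref{thmFstab}: descendants correspond to sets of edge-disjoint triangles, each triangle consumes one of the $ab$ edges between $A$ and $B$, and the count is transferred by deleting or adjoining part-$C$ vertices adjacent to all of $A\cup B$ (the paper passes from $c$ to $ab$ in one step rather than inducting $c\to c+1$, a cosmetic difference). Where you go beyond the paper is in confronting well-definedness on isomorphism classes, which the paper passes over in silence; unfortunately, that is precisely where your argument has a genuine gap.

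The gap appears at $(a,b)=(1,2)$, a case this theorem does cover (the exclusion of $(1,2)$ in Theorem~\ref{thmtri} concerns $\F$, not $\Fd$; indeed $\Fd(K_{1,2,c})$ stabilizes even though $\F(K_{1,2,c})$ does not). First, your characterization of free vertices fails there: when $a+b=3$ the trivial degree-three vertices also have degree $a+b$, they form an independent set, and their only low-degree neighbor is a used $C$-vertex, which has degree $2$, not $3$. So in a descendant of $K_{1,2,c}$ with $c\geq 4$, every trivial vertex has degree $a+b$ and no degree-three neighbor, and thus passes your test for being free; your fallback discussion treats only vertices of $A\cup B$ attaining degree $a+b$, not trivial vertices. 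Second, and worse, the invariance claim itself is false as stated: let $H$ be the descendant of $K_{1,2,3}$ obtained by $\TY$ moves on both of its edge-disjoint triangles $(v,w_1,x_1)$ and $(v,w_2,x_2)$, with trivial vertices $y_1,y_2$ and free vertex $x_3$. Then the permutation $(v\,x_3)(w_1\,y_1)(w_2\,y_2)(x_1\,x_2)$ is an automorphism of $H$ carrying the free vertex $x_3$ to the part-$A$ vertex $v$, so isomorphisms need not carry free vertices to free vertices. What your bijection actually needs, and what is true, is the weaker statement that the image of a free vertex is \emph{automorphism-equivalent} to a free vertex; for $(1,2)$ and $c\geq 4$ this can be recovered by replacing ``adjacent to no degree-three vertex'' with ``adjacent to no degree-two vertex,'' and the base pair $(c,c+1)=(2,3)$ can be checked by hand. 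A related soft spot, inherited from the paper as well: at the base level $c=ab$ a descendant may have no free vertices at all, so well-definedness of your map $A'$ there requires the attachment set $A\cup B$ to be canonical up to automorphism; this follows from degree counting when $a\geq 2$, but not when $a=1$ (in descendants of $K_{1,b,b}$ there are automorphisms exchanging $B$ with the used part of $C$), so that case also needs an explicit argument.
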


\begin{proof}
  Let $G = K_{a,b,c}$ with $c \geq ab$.
  As discussed above, any descendant $H$ of $G$ is the result of a sequence of $\TY$ moves on edge-disjoint triangles, $(v_1,w_1,x_1), \ldots, (v_n,w_n,x_n)$, and the introduced degree three vertices $y_1, \ldots, y_n$ are not part of a triangle in $H$.
  In other words, there is a correspondence between elements of $\Fd (G)$ and sequences $(v_1,w_1,x_1), \ldots, (v_n,w_n,x_n)$ of triangles in $G$.

  As the triangles in such a sequence must be edge-disjoint, the maximum length $n$ of such a sequence is $ab$, the number of edges in the induced complete bipartite graph $K_{a,b}$.

  This leads to a bijection between the elements of $\Fd (K_{a,b,ab})$ and $\Fd (G)$.
  If $H$ is a descendant of $G$, let $(v_1,w_1,x_1), \ldots, (v_n,w_n,x_n)$ be the associated sequence of edge-disjoint triangles.
  Extend the labeling of vertices of $C$ so that $C = \{x_1, \ldots, x_n, x_{n+1}, x_{n+2}, \ldots, x_c\}$.
  By deleting vertices $\{x_{ab+1}, x_{ab+2}, \ldots, x_c\}$ we identify $H$ with an element $H'$ of $\Fd (K_{a,b,ab})$.
  Conversely, by adding vertices $x_{ab+1}, x_{ab+2}, \ldots, x_{c}$, adjacent to each vertex in $A$ and $B$, any graph $H' \in \Fd (K_{a,b,ab})$ becomes
  a $H \in \Fd (G)$.
\end{proof}

Lemmas~\ref{lemFdeqF} and \ref{lemb3} leave open five cases, besides $(1,2)$. The following three lemmas handle these remaining cases.

\begin{lemma}\label{lem3cases}
  Let $1 \leq a \leq b \leq c$ and $c \geq d = \max(4,ab)$.
  Then $\abs*{\F(K_{a,b,c})} = \abs*{\F(K_{a,b,d})}$ in case $(a,b) \in \{ (1,1), (1,3), (2,2) \}$.
\end{lemma}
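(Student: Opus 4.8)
Since $ab \le 4$ in each of the three cases, $d=\max(4,ab)=4$, so the claim is that $\abs*{\F(K_{a,b,c})}$ is independent of $c$ for $c\ge 4$. The plan is to prove, for every $c>4$, a size-preserving bijection $\F(K_{a,b,c})\to\F(K_{a,b,c-1})$ and then induct down to $c=4$. Unlike the situation in Lemmas~\ref{lemFdeqF} and \ref{lemb3}, here $\F$ strictly contains $\Fd$: a $\TY$ move on a triangle through a $C$-vertex $x$ (which has degree $a+b\le 4$) lowers $\deg x$ to three, and for $(1,3)$ and $(2,2)$ the resulting neighborhood of $x$ is independent, so a \emph{nontrivial} $\YT$ move becomes available. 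Consequently Theorem~\ref{thmFstab} does not suffice, and the bijection must be shown to survive arbitrary interleavings of $\TY$ and $\YT$ moves.

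Call a vertex of a cousin \emph{free} if it has degree $a+b$ and its neighborhood is exactly the copy of $A\cup B$; equivalently it sits in a generic $C$-position. Free vertices form an independent set, each is joined to all of $A\cup B$ and to nothing else, and any two are interchangeable, so deleting one free vertex is a well-defined operation on isomorphism classes that carries a cousin of $K_{a,b,c}$ to a cousin of $K_{a,b,c-1}$; adjoining a free vertex is its inverse. Granting that every cousin with $c>4$ possesses a free vertex, these maps are mutually inverse bijections and the induction closes; the case $(1,1)$ is immediate, since all triangles share the unique edge of $A\cup B$, at most one $\TY$ move is ever possible, and hence $\abs*{\F(K_{1,1,c})}=2$.

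Everything therefore reduces to one quantitative statement: the number of \emph{non-free} vertices of any cousin is bounded by a constant depending only on $ab$, never on $c$. The mechanism driving such a bound is edge-disjointness: every triangle meets $A\cup B$ in at least one edge, there are at most $\binom{a+b}{2}\le 6$ edges available inside $A\cup B$, and the triangles realizing a sequence of $\TY$ moves must be edge-disjoint, so only boundedly many $C$-vertices can be drawn out of generic position at once. One then tracks how the quantity (edges inside $A\cup B$) changes: a $\TY$ move deletes such an edge while creating a nontrivial degree-three vertex, and the reversing $\YT$ move---or, more interestingly, a $\YT$ move that instead installs a within-part edge such as $w_aw_b$---restores one, so the active region around $A\cup B$ should remain bounded in terms of $ab$.

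The main obstacle is precisely the proof of this uniform bound, because the within-part edges that $\YT$ moves introduce spawn genuinely new triangles: once $w_aw_b$ is present, every free vertex $x$ yields a triangle $(x,w_a,w_b)$, and $\TY$/$\YT$ interconversions can cascade, even producing an all-in-$A\cup B$ triangle $(w_1,w_2,w_3)$ whose move deletes three internal edges at once. I must show that each such cascade consumes and regenerates free vertices in balance---returning to a configuration with the full complement of generic $C$-vertices rather than exhausting them---so that at least one free vertex always remains when $c>4$. Verifying that these interconversions terminate with the non-free count capped, uniformly in $c$, is the heart of the argument and is what distinguishes $(1,3)$ and $(2,2)$ from the easy case $(1,1)$.
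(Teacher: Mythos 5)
Your proposal is not a proof: it is a reduction plus an explicitly unproven core claim. You correctly observe that $d=4$ in all three cases, and your plan --- delete a ``free'' $C$-vertex to get a bijection $\F(K_{a,b,c})\to\F(K_{a,b,c-1})$ and induct down to $c=4$ --- would work \emph{if} you knew that every cousin of $K_{a,b,c}$ with $c>4$ retains at least one free vertex, uniformly in $c$. But you then concede that establishing this bound (controlling the cascades in which $\YT$ moves create within-part edges such as $w_aw_b$, which spawn new triangles through formerly free vertices) ``is the heart of the argument,'' and you never carry it out. That missing step is exactly where the difficulty of the lemma lives, so what you have is a framework, not a proof. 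There are also secondary gaps in the framework itself: you do not verify that deleting a free vertex of a cousin $H$ lands in $\F(K_{a,b,c-1})$ (the move sequence producing $H$ from $K_{a,b,c}$ may have passed through triangles involving that vertex, so one must argue the sequence can be rerouted or that freeness is preserved move by move), nor that adjoining a free vertex is well defined on isomorphism classes, since identifying ``the copy of $A\cup B$'' inside an abstract cousin is not canonical.

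For comparison, the paper sidesteps all of this by exploiting how small these families are: it enumerates them completely. For $(1,1)$ the family has $2$ graphs; for $(1,3)$ it lists all $6$ cousins, locating the single nontrivial degree-three vertex created by the first $\TY$ move and following the unique $\YT$ and $\TY$ moves it permits; for $(2,2)$ it lists all $10$ cousins of $K_{2,2,4}$ (distinguishing them by degree sequences), then checks every nontrivial degree-three vertex in every cousin and verifies that each $\YT$ move lands back in the list, so $\F=\Fd\cup(\text{finitely many checked graphs})$ is closed. Stability in $c$ then follows because this finite case analysis is verbatim the same for $K_{a,b,c}$, $c\ge 4$: the extra $C$-vertices never acquire degree three and never participate in the nontrivial moves. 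If you want to salvage your approach, you would need to prove your uniform bound on non-free vertices; in practice, for these three pairs $(a,b)$, the only tractable way to do that is the same exhaustive analysis of the possible move sequences that the paper performs, so the detour through the free-vertex bijection buys you nothing here.
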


\begin{proof}
  \begin{figure}[htb]
    \labellist{}
    \small\hair 2pt
    \pinlabel $1$ at -6 150
    \pinlabel $2$ at -6 105
    \pinlabel $3$ at -6 55
    \pinlabel $5$ at -6 5
    \pinlabel $4$ at 64 150
    \pinlabel $6$ at 64 105
    \endlabellist{}
    \includegraphics[scale=0.6]{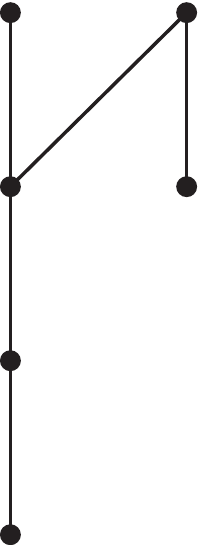}
    \centering
    \caption{The $K_{1,3,4}$ family.\label{figK134}
  }
  \end{figure}

  If $(a,b) = (1,1)$, then $d = 4$.
  Up to symmetry, there is only one triangle in $K_{1,1,c}$ and applying the $\TY$ move leaves a graph that has only one degree three vertex, which is trivial.
  Thus $\abs*{\F(K_{1,1,c})} = \abs*{\F(K_{1,1,4})} = 2$.

  If $(a,b) = (1,3)$, then $d = 4$.
  There are six graphs in $\F(K_{1,3,4})$, illustrated schematically in Figure~\ref{figK134}.
  Graphs at the same height have the same number of vertices (they all have the same number of edges).
  We will argue that, if $c \geq 4$, $\F(K_{1,3,c})$ has the same structure and the same size, six.

  Graph 1 in the figure is $K_{1,3,4}$, and the three graphs below it, 2, 3, and 5, round out $\Fd(K_{1,3,4})$.
  More precisely, in addition to $K_{1,3,4}$ itself, there are three descendants corresponding to the three edges in $K_{1,3}$, the subgraph induced by the vertices in part A and B.
  Each of those three edges can be completed to a triangle using a vertex of part C, and, there are no other (edge-disjoint) triangles in $K_{1,3,4}$.
  Thus $\abs*{\Fd(K_{1,3,4})} = 1 + 3 = 4$.

  However, the first $\TY$ on $K_{1,3,4}$ produces a non-trivial degree three vertex.
  If $(v_1, w_1, x_1)$ are the vertices of the triangle, then $x_1$ becomes a degree three vertex in graph 2.
  Making a $\YT$ move at $x_1$ produces graph 4.
  Up to symmetry, there's a unique triangle in graph 4 and the resulting graph 6 has no non-trivial degree three vertices.

  The analysis above does not change for $\F(K_{1,3,c})$ if $c \geq 4$.
  There are still four graphs in $\Fd(K_{1,3,c})$, the first $\TY$ move on $K_{1,3,c}$ results in a non-trivial degree three vertex $x_1$.
  Applying the $\TY$ at $x_1$ produces a new graph that in turn admits a single $\YT$ move.
  For this reason, $\abs*{\F(K_{1,3,c})} = 6$, as required.

  It remains to treat the case where $(a,b) = 2$.
  For the remainder of this proof only, let $G = K_{2,2,4}$.
  We will proceed as in the family of $K_{1,3,4}$ above, by describing the family and then arguing that nothing changes when we add vertices to the $C$ part.
  A triangle must include a vertex from parts $A, B$, and $C$.
  Let $A = \{v_1, v_2\}, B = \{w_1, w_2\}$, and $C = \{ x_1, x_2, x_3, x_4 \}$.
  At most two triangles can involve $v_1$ and at most two triangles can involve $v_2$.
  We will use an ordered pair to indicate this.
  For example, $G = G_{(2,1)}$ indicates an element of the family where two $\TY$ moves have been performed involving $v_1$ and one $\TY$ triangle has been performed with $v_2$.
  We use superscripts to indicate that there are several ways to construct graphs with the same subscript.
  For example, there are three, non-isomorphic, $G_{(2,2)}$ graphs.

  Without loss of generality, the one or two triangles involving $v_1$ will always be $\{(v_1, w_1, x_1)\}$ and $\{(v_1, w_1, x_1),(v_1, w_2, x_2)\}$, respectively.
  Similarly, for the triangles removed containing $v_2$, the only ways to perform one or two $\TY$ moves, up to symmetry, are summarized in Table~\ref{tab:k224}.
  \begin{table}
    \centering
    \begin{tabular}{ll}
      \toprule
      Graph & Triangles Containing \(v_2\)\\
      \cmidrule(r){1-1} \cmidrule(l){2-2}
      \(G_{(1,1)}^1\) & \((v_2,w_2,x_1)\)\\[4pt]
      \(G_{(1,1)}^2\) & \((v_2,w_2,x_2)\)\\[4pt]
      \(G_{(1,1)}^3\) & \((v_2,w_1,x_2)\)\\[4pt]
      \(G_{(2,1)}^1\) & \((v_2,w_2,x_1)\)\\[4pt]
      \(G_{(2,1)}^2\) & \((v_2,w_2,x_2)\)\\[4pt]
      \(G_{(2,2)}^1\) & \((v_2,w_2,x_1), (v_2,w_1,x_4)\)\\[4pt]
      \(G_{(2,2)}^2\) & \((v_2,w_2,x_2), (v_2,w_1,x_3)\)\\[4pt]
      \(G_{(2,2)}^3\) & \((v_2,w_2,x_2), (v_2,w_1,x_2)\)\\
      \bottomrule
    \end{tabular}
    \caption{The family of $K_{2,2,4}$}\label{tab:k224}
  \end{table}

  Note that $G_{(1,1)}^3$ is isomorphic to $G_{(2,0)}$, so that these, along with $G_{(0,0)}, G_{(1,0)},$ and $G_{(2,0)}$, give us ten graphs.
  We now argue that these ten graphs give us $\Fd(G)$, and that $\Fd(G) = \F(G)$.
  The family is depicted in Figure \ref{fig:k22c family}.

  \begin{figure}[htb]
    \labellist{}
    \small\hair 2pt
    \pinlabel $G_{(1,1)}^1$ at -21 105
    \pinlabel $G_{(2,1)}^1$ at -21 54
    \pinlabel $G_{(2,2)}^1$ at -21 5
    \pinlabel $G_{(0,0)}$ at 75 205
    \pinlabel $G_{(1,0)}$ at 75 155
    \pinlabel $G_{(2,0)}$ at 75 105
    \pinlabel $G_{(2,1)}^2$ at 75 54
    \pinlabel $G_{(2,2)}^2$ at 75 5
    \pinlabel $G_{(1,1)}^2$ at 128 105
    \pinlabel $G_{(2,2)}^3$ at 128 5
    \endlabellist{}
    \centering
    \includegraphics[scale=0.8]{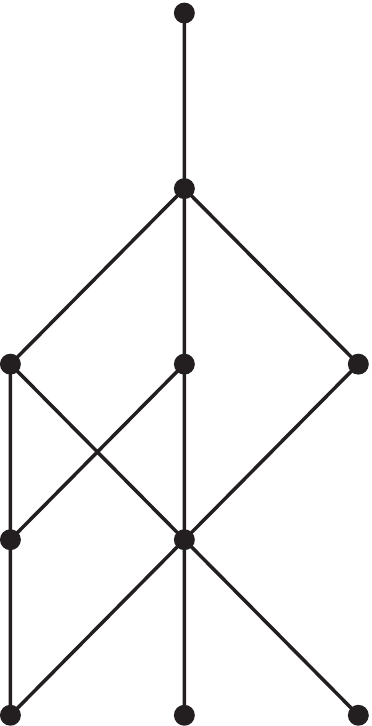}
    \caption{The $K_{2,2,c}$ family}\label{fig:k22c family}
  \end{figure}

  The graphs $G_{(0,0)}$ and $G_{(1,0)}$ are the unique graphs with eight and nine vertices. The three graphs with $10$ vertices are $G_{(1,1)}^1, G_{(1,1)}^2$, and $G_{(2,0)}$ (recalling that $G_{(1,1)}^3$ is isomorphic to $G_{(2,0)}$). Of these three graphs, $G_{(1,1)}^1$ is the unique one with a vertex of degree two and $G_{(2,0)}$ is the unique one with a vertex of degree six, so these three graphs are non-isomorphic.

  There are two graphs of degree $11$, $G_{(2,1)}^1$ and $G_{(2,1)}^2$, but only $G_{(2,1)}^1$ has a vertex of degree two.

  Finally, there are three graphs with twelve vertices, $G_{(2,2)}^1, G_{(2,2)}^2$, and $G_{(2,2)}^3$. Of these, $G_{(2,2)}^2$ is the only one with a vertex of degree two, while $G_{(2,2)}^1$ has five vertices of degree four, whereas $G_{(2,2)}^3$ has only four. This shows that $|\Fd(G)| = 10$.

  We now show that $\Fd(G) = \F(G)$.
  Note that $G_{(0,0)}, G_{(1,0)}, G_{(2,0)}, G_{(1,1)}^1$, and $G_{(2,1)}^1$ have no non-trivial degree three vertices.
  The graph $G_{(1,1)}^2$ has two non-trivial degree three vertices, $x_1$ and $x_4$.
  Performing a $\YT$ move on either yields $G_{(1,0)}$.
  Similarly, for $G_{(2,1)}^2$, performing a $\YT$ on $x_1$ or $x_4$ yields $G_{(2,0)}$ and $G_{(1,1)}^2$, respectively.
  On $G_{(2,2)}^1$, we may perform a $\YT$ move on either $x_3$ or $x_4$, which would result in $G_{(2,1)}^1$ and  $G_{(2,1)}^2$.
  For $G_{(2,2)}^2$, non-trivial degree three vertices are $x_1, x_2, x_3$, and $x_4$.
  A $\YT$ on any of them gives $G_{(2,1)}^2$.
  Finally, the non-trivial degree three vertices for $G_{(2,2)}^3$ are $x_1, x_4$ and $x_2$, and a $\YT$ on any of them yields $G_{(2,1)}^2$.
  This gives that $\Fd(G) = \F(G)$.

  Similar to the $K_{1,3,4}$ case, notice that nothing in this argument changes if we replace $G$ with $K_{2,2,c}$ for $c > 4$.
\end{proof}

\begin{lemma}\label{lem33case}
  If $c \geq 9$, then $\abs*{\F(K_{3,3,c})} = \abs*{\F(K_{3,3,9})}$
\end{lemma}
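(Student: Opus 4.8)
The plan is to prove the sharper statement $\abs*{\F(K_{3,3,c})} = \abs*{\F(K_{3,3,c+1})}$ for every $c \geq 9$ and then induct, the content being a bijection that adjoins or deletes a single \emph{passive} $C$-vertex: a degree-six vertex adjacent to all of $A \cup B$, exactly as in $K_{3,3,c}$ itself. The new feature, compared with the cases of Lemma~\ref{lem3cases}, is that here $\Fd \neq \F$. A $C$-vertex of degree six can be reduced to degree three by three $\TY$ moves using all three of its $A$-edges and all three of its $B$-edges, after which a $\YT$ move deletes it and creates a triangle among three introduced vertices. So the bijection of Theorem~\ref{thmFstab}, which only tracks descendants, must be extended to cover these extra $\YT$ moves, and it is this extension that does all the work.

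First I would fix the two size-three parts $A = \{v_1,v_2,v_3\}$ and $B = \{w_1,w_2,w_3\}$, and record, by the degree counts underlying Lemmas~\ref{lemFdeqF} and \ref{lemb3}, that vertices of $A$ and $B$ never drop to degree three and are never deleted; hence every family member retains a recognizable six-vertex skeleton $A \cup B$ together with some surviving original $C$-vertices and some introduced vertices. Call an original $C$-vertex \emph{active} if it has been deleted or is no longer adjacent to all of $A \cup B$, and \emph{passive} otherwise. The key claim is a budget bound: in every $H \in \F(K_{3,3,c})$ at most nine $C$-vertices are active. The mechanism is that the only way to remove an edge $v_i x$ with $x \in C$ is a $\TY$ move on the triangle $(v_i, w_j, x)$, which consumes the core edge $v_i w_j$ of the induced $K_{3,3}$ on $A \cup B$; since the nine core edges are used by edge-disjoint triangles, a surviving active $C$-vertex is chargeable to at least one currently consumed core edge, while a deleted $C$-vertex, reduced through all three $\TY$ moves, is chargeable to three. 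As there are only nine core edges, the active count is at most nine, so $H$ carries at least $(c+1)-9 \geq 1$ passive $C$-vertices.

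Granting the bound, I would define $\Psi\colon \F(K_{3,3,c}) \to \F(K_{3,3,c+1})$ by attaching one further passive $C$-vertex. This is well defined and injective, since any realizing sequence of moves for $H_0 \in \F(K_{3,3,c})$ can be copied verbatim in $K_{3,3,c+1}$ with the extra vertex riding along untouched, and passive $C$-vertices are mutually interchangeable, so the adjoined vertex can be recovered up to isomorphism. For surjectivity, take $H \in \F(K_{3,3,c+1})$ and a passive vertex $x^\ast$ furnished by the budget bound, and run an exchange argument on a realizing sequence for $H$, rerouting any $\TY$ move that would touch $x^\ast$ onto another passive vertex. This is possible because at the instant before any $C$-vertex is first touched, the triangle being contracted requires a surviving core edge, so at most eight core edges are then consumed; hence at least two $C$-vertices are passive at that instant and a substitute for $x^\ast$ always exists. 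Deleting the now-untouched $x^\ast$ from the rerouted sequence exhibits $H - x^\ast \in \F(K_{3,3,c})$ with $\Psi(H - x^\ast) = H$.

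The main obstacle is making the budget bound airtight once $\YT$ moves are permitted on vertices other than the trivial degree-three ones: in particular, configurations where a vertex introduced by a $\TY$ move is itself consumed by a later $\TY$ move, or where a $\YT$ move creates an edge inside $A$ or $B$, threaten the clean charging of active $C$-vertices against consumed core edges. I expect to resolve this by showing that a $C$-vertex always has equally many surviving $A$-edges and $B$-edges, these being lost and restored only in matched pairs, so that reducing a $C$-vertex to degree three necessarily strips all six of its core edges and charges a full three consumed core edges. With this invariant in hand the count stays at most nine across the whole family, and the exchange argument of the previous paragraph reduces to routine bookkeeping.
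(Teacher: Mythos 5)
Your argument has a genuine gap at its central step, and you have in fact located it yourself: the ``budget bound'' (at most nine original $C$-vertices ever become active) is only justified for descendants, that is, for graphs reached by $\TY$ moves alone. In that setting your charging argument is sound, but it is then nothing more than the paper's Theorem~\ref{thmFstab}: edge-disjoint triangles each consume a distinct edge of the induced $K_{3,3}$ on $A \cup B$, of which there are nine. The whole difficulty of this lemma is that for $K_{3,3,c}$ we have $\Fd \neq \F$ (the paper computes $|\Fd(K_{3,3,9})| = 237$ versus $|\F(K_{3,3,9})| = 298$), so $\YT$ moves genuinely enlarge the family. Once they are in play, core edges can be restored and then consumed again by different triangles, introduced vertices can themselves be consumed, and members of the family can be quite far from tripartite: in the Y-free family member $G_{17}$, the induced graph on the non-passive vertices is $P_9$, a Petersen-family graph. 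Your proposed repair --- the invariant that a $C$-vertex loses and regains $A$-edges and $B$-edges only in matched pairs, so that killing a $C$-vertex always charges three core edges --- is exactly where all of the content of the lemma lives, and you leave it as an expectation rather than prove it; without it, neither the bound of nine nor the rerouting step in your surjectivity argument goes through. A secondary but real error: you justify ``$A$ and $B$ vertices never drop to degree three'' by appeal to the degree counts of Lemmas~\ref{lemFdeqF} and~\ref{lemb3}, but their hypotheses ($a+b>6$, respectively $b>3$) both fail for $K_{3,3,c}$ --- that failure is precisely why this case needs its own lemma. The conclusion happens to be true, but only because of structural facts about this particular family that your unproven invariant (or a computation) would have to supply.

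It is worth seeing how the paper gets around this, because it never attempts the bookkeeping you are missing. It enumerates $\F(K_{3,3,9})$ by computer, observes that the family decomposes as $\Fd(K_{3,3,9}) \cup \Fd(G_{17}) \cup \Fd(G_{19}) \cup \Fd(G_{21})$, where $G_{17}$, $G_{19}$, $G_{21}$ are the Y-free members, and then extends the stabilization bijection of Theorem~\ref{thmFstab} root by root: each root retains enough passive $C$-vertices ($8$, $7$, $6$ respectively) compared with the number of edge-disjoint triangles its structure supports (at most $6$, $3$, $0$). Your overall shape --- passive degree-six vertices ride along untouched, and one adds or deletes a single such vertex to produce the bijection --- matches the spirit of that extension step. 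But the structural inputs (which Y-free graphs arise, and how many $C$-vertices stay passive in each) are obtained there by computation, not by a charging scheme. To complete your route you would have to actually prove the matched-pairs invariant, or some substitute that controls what arbitrary $\TY$/$\YT$ sequences can do to the tripartite structure; that is a substantial missing step, not routine bookkeeping.
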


\begin{proof}
  With the aid of a computer, we verify that $\abs*{\F(K_{3,3,9})}=298$ and $\abs*{\Fd(K_{3,3,9})} = 237$.
  By Theorem~\ref{thmFstab}, for $c \geq 9$, $|\Fd(K_{3,3,c})| = |\Fd(K_{3,3,9})| = 237$.
  We must show that the remaining 61 graphs of $\F(K_{3,3,9})$ can be identified uniquely with those of $\F(K_{3,3,c})$ whenever $c \geq 9$.

  For this, we note that there are three additional graphs in $\F(K_{3,3,9})$ that are Y-free; they have no degree three vertices.
  We denote them as $G_{17}$, $G_{19}$, and $G_{21}$, where the subscript corresponds to the order (number of vertices, all graphs in the family have size 63).
	In other words, $\F(K_{3,3,9}) = \Fd(K_{3,3,9}) \cup \Fd (G_{17}) \cup \Fd (G_{19}) \cup \Fd(G_{21})$.
  Our strategy is to argue that there are analogous graphs $G^c_{17}$, $G^c_{19}$, and $G^c_{21}$ in $\F(K_{3,3,c})$ (for $c \geq 9$) and that the bijection between $\Fd(K_{3,3,9})$ and $\Fd(K_{3,3,c})$ extends to show the pairs $\Fd(G_i)$ and $\Fd(G^c_i)$, $i = 17,19,21$ are also in bijection.


  \begin{figure}[htb]
    \centering
    \includegraphics[scale=0.6]{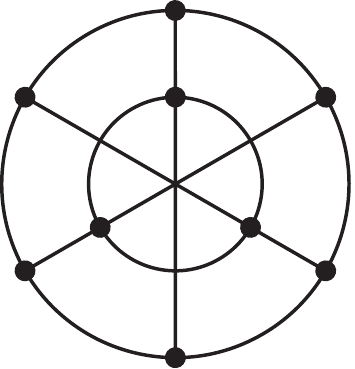}
		\hfill
    \includegraphics[scale=0.3]{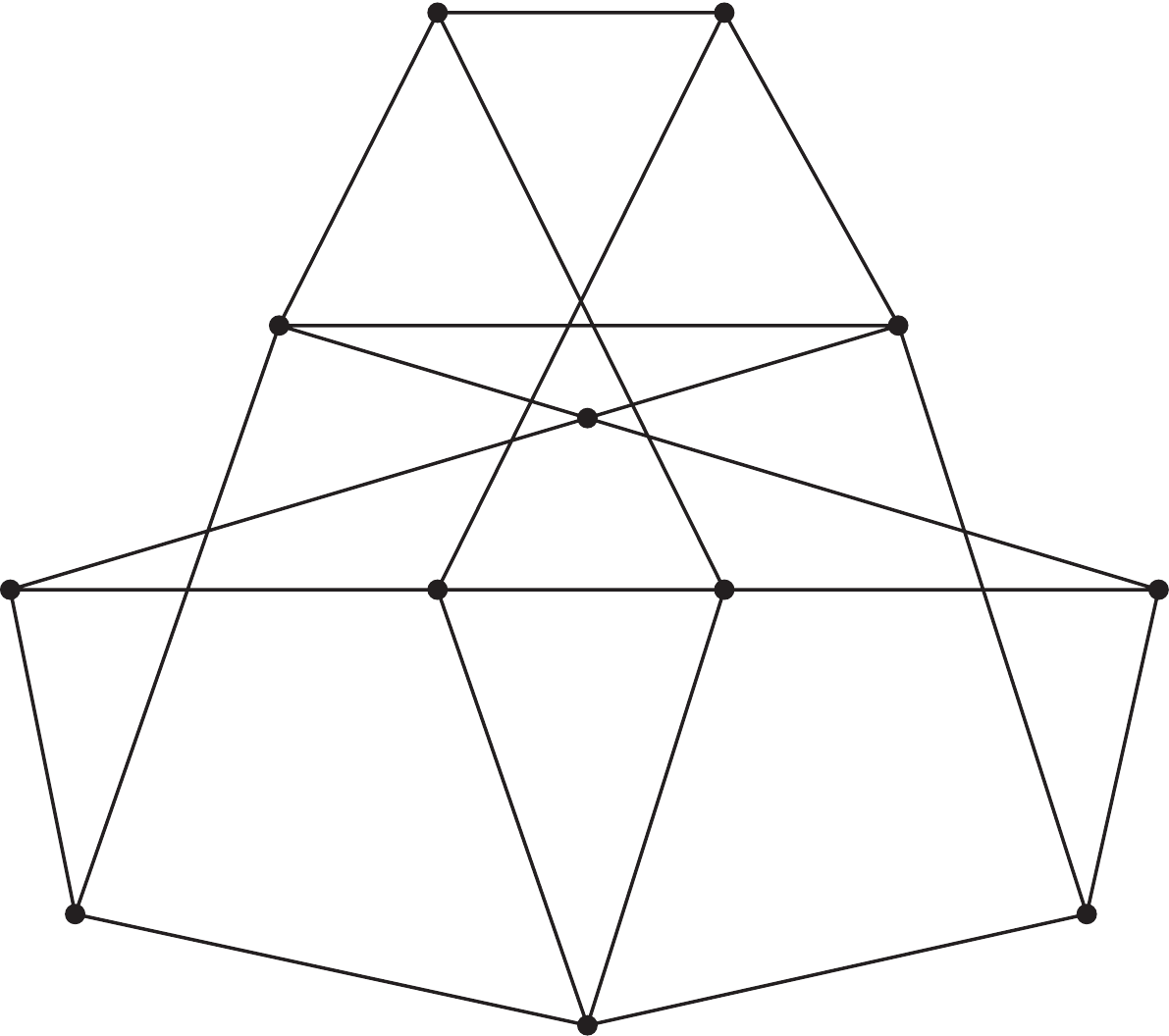}
		\hfill
    \includegraphics[scale=0.3]{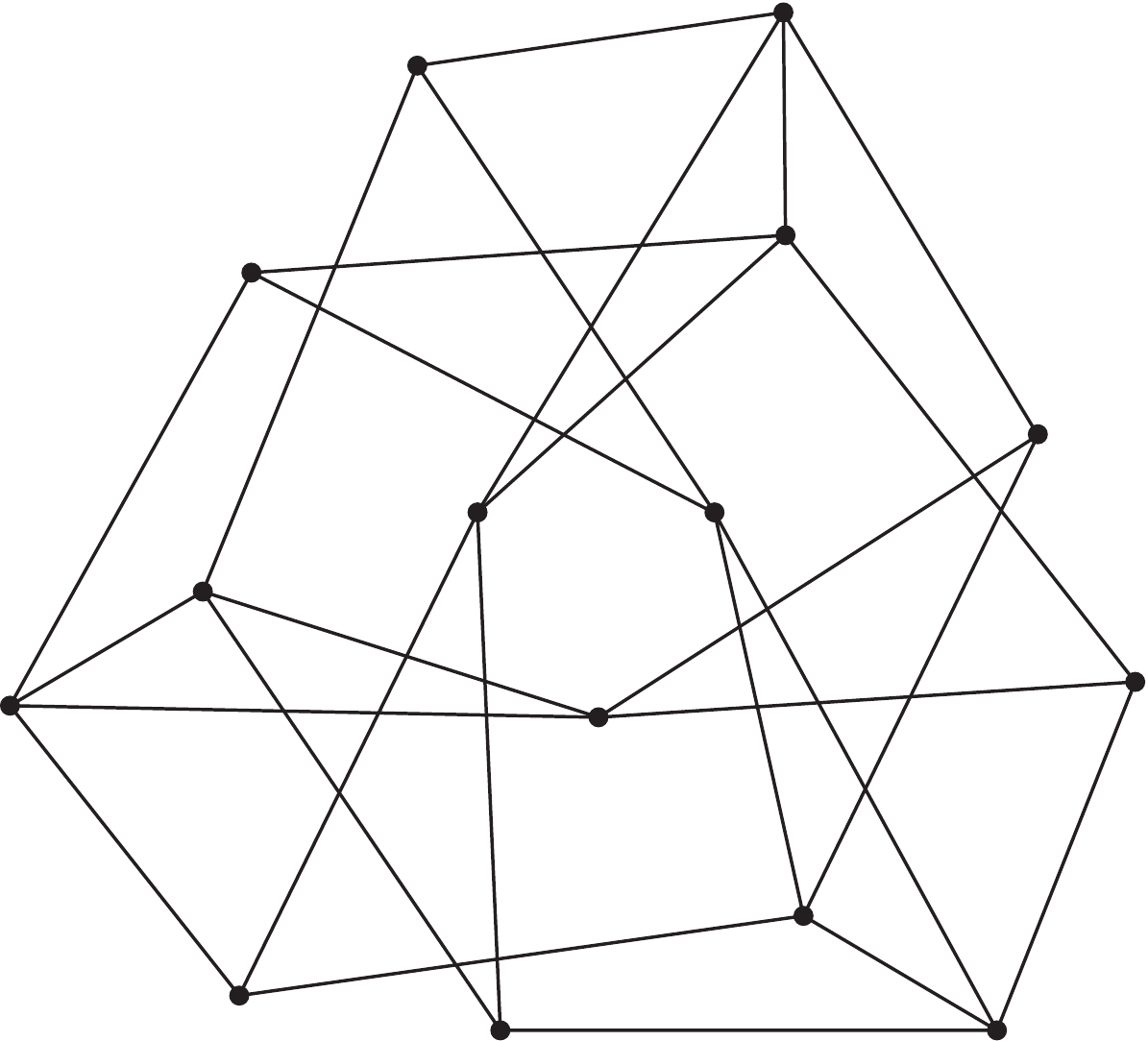}
		\caption{Subgraphs related to $K_{3,3,9}$ (L to R): $G_{17}, G_{19}$, and $G_{21}$.}\label{fig:k339}
  \end{figure}

  For this, it will be important to keep track of how the C part vertices appear in each of the Y-free graphs.
  For example, eight of the C part vertices of $K_{3,3,9}$ survive in $G_{17}$, each having degree six. The induced graph on the remaining nine vertices is $P_9$, the graph on
  nine vertices in the Petersen family $\F(K_{1,3,3})$ (see Figure~\ref{fig:k339}).
  Indeed, if we ignore eight of the C vertices of $K_{3,3,9}$, what remains is a $K_{1,3,3}$. We can identify the
  sequence of $\TY$ and $\YT$ moves as taking place in $\F(K_{1,3,3})$ while the eight C vertices maintain degree six throughout
  the sequence of moves. The neighbors of the eight $C$ vertices are the six vertices of degree three in $P_9$. 

  Then, the analogue in $\F(K_{3,3,c})$, $G^c_{17}$, consists of a $P_9$
  along with $c-1$ additional `part C' vertices of degree six, each adjacent to the six degree three vertices of the $P_9$. 
  In other words, for $c \geq 9$, there are at least eight part C vertices in $G^c_{17}$. As in the proof of
  Theorem~\ref{thmFstab}, to show that $\Fd(G_{17})$ is in bijection with $\Fd(G^c_{17})$, it is enough to observe that there are 
  at most eight edge-disjoint triangles in $G_{17}$ (or $G^c_{17}$) that make use of part C vertices.
  In fact there are only six edges between degree three vertices of $P_9$, which is less than eight.
  Therefore, the bijection of 
  Theorem~\ref{thmFstab} extends and shows $\Fd(G_{17})$ is in bijection with $\Fd(G^c_{17})$.

  For graph $G_{19}$, there are seven part C vertices, each of degree six. The induced graph, $H_{12}$ on the remaining 12 vertices has 21 edges and is shown in
  Figure~\ref{fig:k339}. The seven part C vertices are adjacent to each of the six degree three vertices in $H_{12}$. To show that $\Fd(G_{19})$ is in bijection
  with $\Fd(G^c_{19})$, it is enough to observe that there are at most seven edges in $H_{12}$ between degree three vertices. In fact, there are only three.

  Finally, for $G_{21}$, six part C vertices remain, each of degree six. The induced graph, $H_{15}$ (see Figure~\ref{fig:k339}) on the other 15 vertices has 27 edges.
  The part C vertices are adjacent to each of the six degree three vertices in $H_{15}$. There are no longer any edges directly connecting any pair of degree three vertices
  in $H_{15}$, so we again have the required bijection between the graphs of $\Fd(G_{21})$ and $\Fd(G^c_{21})$.
\end{proof}

\begin{lemma}\label{lem23case}
If $c \geq 6$, then $|\F(K_{2,3,c})| = |\F(K_{2,3,6})|$.
\end{lemma}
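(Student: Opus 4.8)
The plan is to follow the template of Lemma~\ref{lem33case}, since the case $(a,b)=(2,3)$ is exactly the situation in which $\TY$ moves can create non-trivial degree three vertices. Write $A,B,C$ for the three parts, with $\abs{A}=2$, $\abs{B}=3$, $\abs{C}=c$. The vertices of $C$ have degree $a+b=5$, and two edge-disjoint $\TY$ moves at such a vertex reduce its degree to three. Hence $\Fd(K_{2,3,c}) \subsetneq \F(K_{2,3,c})$ in general, and neither Lemma~\ref{lemFdeqF} nor Lemma~\ref{lemb3} applies. The first step is to record that, since $c \geq 6 = ab$, Theorem~\ref{thmFstab} already gives $\abs{\Fd(K_{2,3,c})} = \abs{\Fd(K_{2,3,6})}$ for all $c \geq 6$, so it remains only to account for the graphs of $\F$ that are not descendants of $K_{2,3,c}$.

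Next I would use a computer to determine $\abs{\F(K_{2,3,6})}$ and $\abs{\Fd(K_{2,3,6})}$ and, as in the $K_{3,3,9}$ analysis, to locate the Y-free graphs (those with no degree three vertex) in $\F(K_{2,3,6})$ beyond $K_{2,3,6}$ itself. These additional Y-free graphs arise by driving a $C$ vertex down to degree three with two $\TY$ moves and then reversing with a $\YT$ move, effectively \emph{merging} a pair of triangles. Because the triangles of $K_{2,3,c}$ must be edge-disjoint and the induced bipartite graph $K_{2,3}$ on $A \cup B$ has only six edges, at most three $C$ vertices can be consumed in this way, bounding the number of such Y-free graphs independently of $c$. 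The goal is a decomposition $\F(K_{2,3,6}) = \Fd(K_{2,3,6}) \cup \bigcup_i \Fd(G_i)$, with $\{G_i\}$ the finite list of extra Y-free graphs supplied by the computation.

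For each extra Y-free graph $G_i$ I would argue exactly as in Lemma~\ref{lem33case}: all but a bounded number of the $C$ vertices survive in $G_i$ with degree five, adjacent to every vertex of $A \cup B$, while the ``core'' of $G_i$ --- the part involving $A$, $B$, and the consumed $C$ vertices --- is fixed and independent of $c$. I then form the analogue $G_i^c \in \F(K_{2,3,c})$ by adjoining degree five $C$ vertices to this core and invoke the deletion/addition bijection of Theorem~\ref{thmFstab} to conclude $\abs{\Fd(G_i)} = \abs{\Fd(G_i^c)}$. The key quantitative input is that every triangle through a surviving $C$ vertex uses one of the (at most six) remaining edges of $K_{2,3}$ in the core, so the number of edge-disjoint triangles making use of $C$ vertices is bounded independently of $c$, which is precisely the hypothesis that lets the Theorem~\ref{thmFstab} bijection extend.

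The main obstacle is the bookkeeping that turns these bijections into the single equality $\abs{\F(K_{2,3,c})} = \abs{\F(K_{2,3,6})}$. I must verify, from the computer enumeration at $c=6$, that the list $\{G_i\}$ is complete and that $\Fd(K_{2,3,6})$ and the $\Fd(G_i)$ are pairwise disjoint, so that the stabilizations add up correctly and no further Y-free graph appears for larger $c$. Establishing that each element of $\F(K_{2,3,c})$ reduces under $\YT$ moves to exactly one graph on this list --- so that the union is in fact disjoint and mirrored at every $c \geq 6$ --- is the delicate point, and it is where the explicit structure of the finitely many cores must be checked.
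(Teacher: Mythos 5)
Your overall strategy is the same as the paper's: invoke Theorem~\ref{thmFstab} to stabilize the descendant families, enumerate $\F(K_{2,3,6})$ by computer (the paper finds $\abs*{\F(K_{2,3,6})}=97$ and $\abs*{\Fd(K_{2,3,6})}=30$), identify the Y-free graphs outside $\Fd(K_{2,3,6})$ (there are seven, with between three and five surviving part-$C$ vertices, consistent with your bound of at most three consumed $C$ vertices), decompose $\F(K_{2,3,6})$ as the union of $\Fd(K_{2,3,6})$ with the descendant families of these Y-free graphs, and extend the deletion/addition bijection of Theorem~\ref{thmFstab} to each piece.

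However, the final bookkeeping step as you describe it would fail. You require that the families in the decomposition be pairwise disjoint, i.e., that each graph reduces under $\YT$ moves to exactly one root on the list. This is false: in the paper's data the seven Y-free graphs have descendant families of sizes $51, 29, 18, 19, 16, 4, 4$, which together with $\abs*{\Fd(K_{2,3,6})}=30$ sum to $171$, far exceeding $97$, so the overlaps are substantial (a graph can contain several non-trivial degree three vertices whose $\YT$ moves lead to different Y-free roots). Disjointness is also not what the argument needs. What makes the count come out right is that all of the individual bijections $\Fd(X) \to \Fd(X^c)$ are restrictions of one and the same correspondence --- delete, respectively add back, the extra $c-6$ part-$C$ vertices --- so they automatically agree on overlaps and glue to a single well-defined bijection from the union $\F(K_{2,3,6})$ onto the union $\F(K_{2,3,c})$. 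With that correction (and the computer verification that the list of Y-free graphs is complete, which you already include), your proof goes through and coincides with the paper's.
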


\begin{proof}
  The idea is the same as in Lemma~\ref{lem33case}.
  With the aid of a computer, we have that $\abs*{F(K_{2,3,6})} = 97$ and $\abs*{\Fd(K_{2,3,6})} = 30$ and so there are then $67$ graphs in $\F(K_{2,3,6}) \setminus \Fd(K_{2,3,6})$.
  There are seven graphs in $\F(K_{2,3,6}) \setminus \Fd(K_{2,3,6})$ that have no degree three vertices.
  A summary of the properties of these graphs is given in Table~\ref{k236_seven}.
  Subscripts indicate the number of vertices in the graph.
    
  \begin{table}[]
    \centering
    \begin{tabular}{ccc}
      \toprule
      Graph $G$ & Part $C$ Survivors & $\abs*{\Fd(G)}$\\
      \cmidrule(r){1-1} \cmidrule(lr){2-2} \cmidrule(l){3-3}
      \(G_{12}\) & 5 & 51\\
      \(I_{12}\) & 4 & 29\\
      \(G_{13}\) & 4 & 18\\
      \(H_{13}\) & 4 & 19\\
      \(I_{13}\) & 4 & 16\\
      \(J_{13}\) & 4 & 4\\
      \(G_{14}\) & 3 & 4\\
      \bottomrule
    \end{tabular}
    \caption{Graphs in $\F(K_{2,3,6}) \setminus \Fd(K_{2,3,6})$ Without Degree Three Vertices}\label{k236_seven}
  \end{table}

  Since $\F(K_{2,3,6}) = \Fd(K_{2,3,6}) \cup \Fd(G_{12}) \cup \Fd(I_{12}) \cup \Fd(G_{13}) \cup \Fd(H_{13}) \cup \Fd(I_{13}) \cup \Fd(J_{13}) \cup \Fd(G_{14})$, we will again argue that their are analogous graphs $X^c$ for $X \in \{G_{12}, I_{12}, G_{13}, H_{13}, I_{13}, J_{13}, G_{14}\}$ such that the bijection between $\Fd(K_{2,3,6})$ and $\Fd(K_{2,3,c})$ for $c \geq 6$ extends to a bijection between $\Fd(X)$ and $\Fd(X^c)$ for $X \in \{G_{12}, I_{12}, G_{13}, H_{13}, I_{13}, J_{13}, G_{14}\}$.

  In the case of $G_{12}$, five of the part $C$ vertices of $K_{2,3,6}$ survive in $G_{12}$, each with degree five.
  Deleting these five vertices give us the subgraph in Figure~\ref{fig:k236}.
  The part $C$ vertices are each adjacent to all of the vertices in this subgraph except the ``top-left'' and ``bottom-right'' vertices of degree four.
  Thus the analogue in $\F(K_{2,3,c})$, $G_{12}^c$, consists of Figure~\ref{fig:k236} along with $c-1$ additional part $C$ vertices, with the same adjacencies.
  Since there are four edge-disjoint triangles involving part $C$ vertices in either $G_{12}$ or $G_{12}^c$, the bijection in Theorem~\ref{thmFstab} extends to a bijection between $\Fd(G_{12})$ and $\Fd(G_{12}^c)$.


	\begin{figure}
	  \centering
		\includegraphics[scale=0.3]{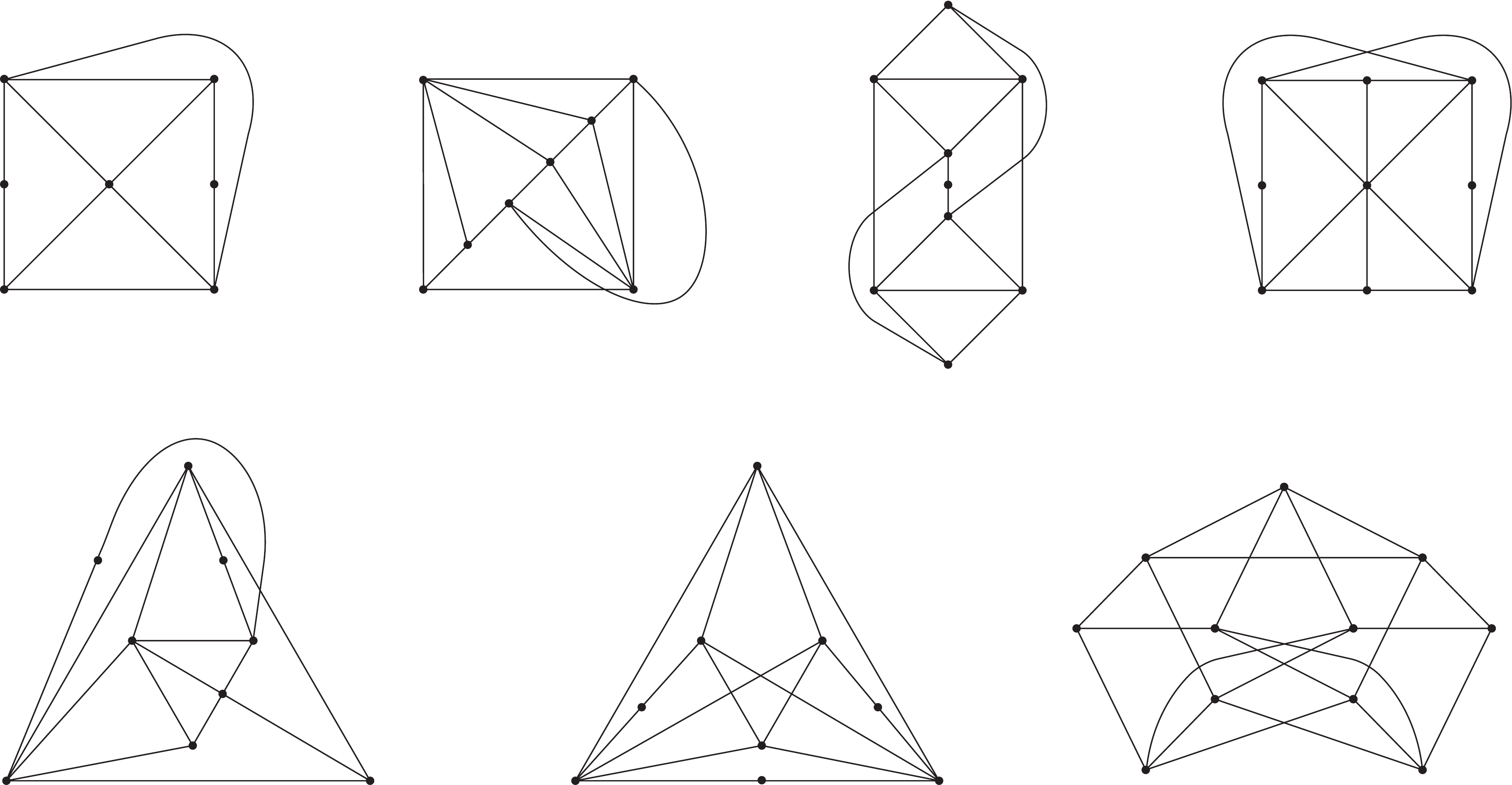}
		\caption{Subgraphs related to $K_{2,3,6}$. Top row (L to R): $G_{12}, I_{12}, G_{13}, I_{13}$; bottom row (L to R): $H_{13}, J_{13}, G_{14}$.}\label{fig:k236}
	\end{figure}

  The other six cases are similar. The subgraphs resulting from removing the part $C$ vertices are depicted in Figure~\ref{fig:k236}.
\end{proof}

\begin{proof} (of Theorem~\ref{thmtri}) 
  Combining Lemmas~\ref{lemFdeqF} and \ref{lemb3} with Theorem~\ref{thmFstab} establishes the theorem if either $a+b>6$ or $b > 3$. Lemmas~\ref{lem3cases}, \ref{lem33case}, and \ref{lem23case} handle the remaining cases.
\end{proof}

%
%

\section{Families of multipartite graphs stabilize.}

It is straight-forward to alter the arguments in the preceding section to multipartite graphs. We do so now.

\begin{lemma}\label{lem6sum}
  Let $1 \leq a_1 \leq \cdots \leq a_n$ and $a_1 + \cdots + a_{n-1} > 6$, then
  \[
      \abs*{\mathcal{F}_{\Delta}(K_{a_1, \ldots, a_n})} = \abs*{\mathcal{F}(K_{a_1, \ldots, a_n})}.
  \]
\end{lemma}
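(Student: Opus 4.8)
The plan is to imitate the proof of Lemma~\ref{lemFdeqF} verbatim, with the quantity $a+b$ replaced throughout by $a_1 + \cdots + a_{n-1}$. Since every descendant is a cousin, the inclusion $\Fd(K_{a_1,\ldots,a_n}) \subseteq \F(K_{a_1,\ldots,a_n})$ holds for free, so all the work lies in ruling out cousins that are not descendants. As in the tripartite setting, this reduces to a single claim: in every descendant of $K_{a_1,\ldots,a_n}$, the only degree three vertices are the trivial ones produced by $\TY$ moves. Once that claim is in hand, any $\YT$ move must be performed at a trivial vertex and hence merely reverses an earlier $\TY$ move, so $\Fd$ is closed under $\YT$ moves as well and the two families coincide.

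First I would pin down the minimum degree of $G = K_{a_1,\ldots,a_n}$. A vertex in the $i$-th part is adjacent to every vertex outside that part, so it has degree $(a_1 + \cdots + a_n) - a_i$; this is smallest when $a_i$ is largest, and since $a_1 \leq \cdots \leq a_n$ the minimum degree is $a_1 + \cdots + a_{n-1}$, realized by the vertices of the $a_n$ part. Next I would record the same halving estimate used before: a $\TY$ move at a triangle through a vertex $x$ deletes two of the edges at $x$ and adds a single new edge to the freshly created degree three vertex, so each such move lowers $\deg(x)$ by one while consuming two of its original edges. Because the triangles in a descendant's defining sequence are edge-disjoint, at most $\lfloor \deg_G(x)/2 \rfloor$ of them can meet $x$, and the new trivial vertices contribute no further triangles through $x$; hence the degree of any original vertex never drops below $\lceil \deg_G(x)/2 \rceil$.

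Combining these two observations finishes the argument. The hypothesis $a_1 + \cdots + a_{n-1} > 6$ forces every original vertex to begin with degree at least $7$, so even maximal halving leaves it with degree at least $\lceil 7/2 \rceil = 4 > 3$. Thus no original vertex can become a degree three vertex in any descendant, the displayed claim holds, and we obtain $\abs*{\Fd(K_{a_1,\ldots,a_n})} = \abs*{\F(K_{a_1,\ldots,a_n})}$.

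I do not expect a genuine obstacle: this is a direct multipartite analogue of Lemma~\ref{lemFdeqF}, and the only point deserving care is the degree bookkeeping. One should verify both that $a_1 + \cdots + a_{n-1}$ really is the minimum degree (it is the degree of the \emph{largest} part, which is easy to misread) and that the strict inequality $>6$ is essential rather than incidental. The latter is exactly the sharp case: a vertex of initial degree $6$ could be halved all the way down to $3$ via three edge-disjoint triangles through it, producing a non-trivial degree three vertex and breaking the closure under $\YT$ moves, which is why the bound cannot be weakened to $\geq 6$.
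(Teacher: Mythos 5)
Your proposal is correct and follows essentially the same route as the paper: the paper's proof likewise reduces to the observation that the minimum degree $a_1 + \cdots + a_{n-1}$ occurs in the part $A_n$, that edge-disjoint $\TY$ moves can at most halve a vertex's degree, and that the hypothesis $>6$ therefore prevents any non-trivial degree three vertex from arising, so every $\YT$ move only reverses a prior $\TY$ move. Your version merely spells out the $\lfloor \deg/2 \rfloor$ bookkeeping and the closure-under-$\YT$ step more explicitly than the paper does.
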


\begin{proof}
  The argument is identical to the proof of Lemma~\ref{lemFdeqF}.
  Let $G = K_{a_1, \ldots, a_n}$ and $A_1, A_2, \ldots, A_n$ be a partition of $V(G)$ with each $|A_i| = a_i$.
  A $\TY$ move will produce only trivial degree three vertices.
  The vertices of least degree are those in $A_n$, which have degree $a_1 + \cdots + a_{n-1}$.
  Since $\TY$ moves can at most halve the degree of a vertex in $A_n$ and these have degree greater than $6$, the only degree three vertices in a descendant of $K_{a_1, \ldots, a_n}$ are the trivial ones.
\end{proof}

If there are at least seven parts, then the sum of the $a_i$'s will automatically exceed six. So the next lemma follows immediately from the last.

\begin{lemma}
  Let $1 \leq a_1 \leq \cdots \leq a_n$ and $n > 6$, then
  \[
      \abs*{\mathcal{F}_{\Delta}(K_{a_1, \ldots, a_n})} = \abs*{\mathcal{F}(K_{a_1, \ldots, a_n})}.
  \]
\end{lemma}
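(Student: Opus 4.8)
The plan is to push everything through Lemma~\ref{lem6sum} and to isolate the one borderline family where that is not possible. Write $s = a_1 + \cdots + a_{n-1}$ for the common degree of the vertices of the largest part $A_n$; these are the vertices of least degree, and Lemma~\ref{lem6sum} gives the desired equality as soon as $s > 6$. Since every $a_i \geq 1$, we have $s \geq n-1$. For $n \geq 8$ this forces $s \geq 7 > 6$, so Lemma~\ref{lem6sum} applies at once. For $n = 7$ we only get $s \geq 6$, with $s > 6$ unless $a_1 = \cdots = a_6 = 1$; so if any of the first six parts has size at least two, Lemma~\ref{lem6sum} again finishes the argument. The one family left uncovered is $K_{1^6,c} = K_{1,1,1,1,1,1,c}$ for $c \geq 1$ --- the join of $K_6$ with $c$ independent vertices --- and I would treat it directly rather than exclude it.

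The reason $K_{1^6,c}$ is the main obstacle is that its vertices of least degree, those in the part $C$ of size $c$, have degree exactly $6$ rather than more than $6$. The halving bound behind Lemma~\ref{lemFdeqF} then only guarantees that such a vertex stays at degree at least three, and this bound is attained: three edge-disjoint $\TY$ moves through a single $x \in C$, say on the triangles $(x,1,2)$, $(x,3,4)$, and $(x,5,6)$ formed with the six vertices of the $K_6$, delete all six edges at $x$ and leave it joined only to three trivial vertices. This $x$ is a genuine, non-trivial, degree-three vertex, so a $\YT$ move at $x$ is available and does not merely undo a single earlier $\TY$ move. Thus the clean ``only trivial degree-three vertices occur'' argument fails exactly here, which is the step I expect to be hardest.

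To settle $K_{1^6,c}$ I would follow the template of the borderline tripartite families in Lemmas~\ref{lem33case} and~\ref{lem23case}: first pin down the base case $K_7$ completely, computing $\Fd(K_7)$, the cousins of $K_7$ with no degree-three vertices, and their own descendant families, and then extend to all $c \geq 1$ by the stabilization bijection of Theorem~\ref{thmFstab}, matching the extra degree-six $C$-vertices across cousins while tracking the edge-disjoint triangles available through $C$. The crux is to account for precisely those cousins that appear only after a $C$-vertex has been reduced to degree three and a subsequent $\YT$ move applied, as in the previous paragraph; a correct proof of the lemma as stated must show that all such cousins are already counted among the descendants. Reconciling these two counts for $K_{1^6,c}$ is the heart of the matter and the single place where the reduction to Lemma~\ref{lem6sum} does not by itself suffice.
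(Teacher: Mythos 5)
Your diagnosis is exactly right, and in fact sharper than you give yourself credit for: the case $K_{1^6,c} = K_{1,1,1,1,1,1,c}$ that you isolate is not merely the hard residual case --- it is a counterexample, so the reconciliation you flag as ``the heart of the matter'' cannot be carried out. The paper's own proof is the one-line remark that with more than six parts ``the sum of the $a_i$'s will automatically exceed six,'' which is off by one: $n = 7$ only gives $a_1 + \cdots + a_6 \geq 6$, with equality precisely for your family, so Lemma~\ref{lem6sum} does not apply there, just as you observe. And the claimed equality genuinely fails there. Your own construction finishes the job: perform $\TY$ moves at the three edge-disjoint triangles $(x,u_1,u_2)$, $(x,u_3,u_4)$, $(x,u_5,u_6)$ through a part-$C$ vertex $x$ (the $u_i$ being the six singleton-part vertices), then a $\YT$ move at $x$, which is legal since its three neighbors $y_1, y_2, y_3$ are pairwise non-adjacent. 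Call the result $G'$. Then $G'$ has $6 + (c-1) + 3 = c+8$ vertices, so if $G'$ were a descendant it would arise from exactly two $\TY$ moves and hence would have exactly two degree-three vertices (with only two moves, no original vertex can drop below degree four, so the only degree-three vertices are the two trivial ones). But the degree sequence of $G'$ is: six vertices of degree $c+4$, $c-1$ of degree $6$, and three of degree $4$ (the $y_i$ now form a triangle); it has no degree-three vertex at all. Hence $G' \in \F(K_{1^6,c}) \setminus \Fd(K_{1^6,c})$ for every $c \geq 1$, and the lemma as stated is false. For $c = 1$ this is corroborated independently: the descendants of $K_7$ are the fourteen Kohara--Suzuki graphs, while Table~\ref{tblkn} of this paper records $\abs*{\F(K_7)} = 20$.

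So your reduction is correct and complete where the statement is true: for $n \geq 8$, and for $n = 7$ with some $a_i \geq 2$, $i \leq 6$, Lemma~\ref{lem6sum} applies and nothing more is needed --- this is essentially the intended proof. The plan you sketch for $K_{1^6,c}$ (compute the $K_7$ base case, then extend via the bijection of Theorem~\ref{thmFstab}) is the right instinct for a borderline family, but no amount of bookkeeping will show the extra cousins ``are already counted among the descendants,'' because they are not. The statement needs its hypothesis repaired, e.g.\ by excluding $K_{1^6,c}$ or, equivalently, by retaining the condition $a_1 + \cdots + a_{n-1} > 6$ of Lemma~\ref{lem6sum}. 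Note that the paper's main result, Theorem~\ref{thmmain}, is unaffected: it assumes $a_1 + \cdots + a_{n-1} > 6$ explicitly and invokes Lemma~\ref{lem6sum} directly, bypassing this lemma entirely.
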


\begin{thm}\label{thmmpstab}
  Let $1 \leq a_1 \leq \cdots \leq a_n$ and $e = \#E(K_{a_1, \ldots, a_{n-1}})$. If $a_n \geq e$, then
  \[
  \abs*{\mathcal{F}_{\Delta} (K_{a_1, \ldots, a_n})} = \abs*{\mathcal{F}_{\Delta} (K_{a_1, \ldots, a_{n-1}, e})}.
  \]
\end{thm}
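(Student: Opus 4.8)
The plan is to mirror the proof of Theorem~\ref{thmFstab}, exploiting the same correspondence between descendants of a complete multipartite graph and its sets of edge-disjoint triangles. Write $G = K_{a_1,\ldots,a_n}$ with parts $A_1,\ldots,A_n$, and let $A_n$ be the part we are enlarging. As recalled above, every descendant $H$ of $G$ arises from a set $S$ of edge-disjoint triangles via $\TY$ moves, with the introduced degree three vertices lying outside every triangle of $H$. The essential new feature, compared with the tripartite case, is that $S$ may now contain triangles that avoid $A_n$ entirely, lying within $A_1 \cup \cdots \cup A_{n-1}$; the crux of the argument is that such triangles never interact with the vertices of $A_n$ we wish to add or delete.

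First I would set up the counting that produces the bound $e$. Every triangle of $G$ uses one vertex from each of three distinct parts, so a triangle $(u,w,x)$ meeting $A_n$ has its third vertex $x \in A_n$ and its \emph{base edge} $uw$ lying in the induced copy of $K_{a_1,\ldots,a_{n-1}}$. Edge-disjointness forces distinct triangles of $S$ to have distinct base edges, so at most $e = \#E(K_{a_1,\ldots,a_{n-1}})$ triangles of $S$ meet $A_n$. Since each such triangle contains exactly one vertex of $A_n$, at most $e$ distinct vertices of $A_n$ occur in any triangle of $S$; call these the \emph{used} vertices and the rest the \emph{unused} ones. As $a_n \geq e$, there are at least $a_n - e$ unused vertices.

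Next I would record the interchangeability of unused $A_n$-vertices. A $\TY$ move deletes only the edges of a triangle in $S$, and an edge incident to $x \in A_n$ lies in a triangle of $S$ only if that triangle contains $x$. Hence every unused vertex retains all of its original edges and, belonging to no triangle, is nonadjacent to each of the new degree three vertices. Consequently all unused vertices of $A_n$ have the identical neighborhood $A_1 \cup \cdots \cup A_{n-1}$ in $H$ and are pairwise interchangeable. This lets me define a trimming map $\Phi \colon \Fd(K_{a_1,\ldots,a_n}) \to \Fd(K_{a_1,\ldots,a_{n-1},e})$ that deletes $a_n - e$ unused vertices (retaining all used ones, which is possible because at most $e$ vertices are used), together with a padding map $\Psi$ in the reverse direction that reinserts $a_n - e$ vertices adjacent to all of $A_1 \cup \cdots \cup A_{n-1}$; applying the same triangle set $S$ shows the image of $\Psi$ really is a descendant. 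By interchangeability these maps are well defined on isomorphism classes and are mutually inverse, giving the claimed equality of family sizes.

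I expect the main obstacle to be verifying that $\Phi$ and $\Psi$ are well defined on isomorphism classes and genuinely inverse, that is, that trimming and padding the unused part commutes with the entire system of $\TY$ moves, including those on triangles interior to $A_1 \cup \cdots \cup A_{n-1}$. Here I would lean on the observation above that such interior moves touch none of the edges at any $A_n$-vertex, so they leave the used/unused dichotomy and the unused vertices' neighborhoods intact; thus deleting or adding unused vertices can be carried out before or after the moves without changing the resulting isomorphism type. Checking that an abstract isomorphism $H_1 \cong H_2$ descends to an isomorphism of the trimmed graphs, and conversely, reduces, as in Theorem~\ref{thmFstab}, to the fact that the unused vertices form an interchangeable block determined up to isomorphism by their count and their common neighborhood.
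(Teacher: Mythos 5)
Your proposal is correct and takes essentially the same approach as the paper: the paper's proof also identifies descendants with sets of edge-disjoint triangles, bounds the number of part-$A_n$ vertices that can appear in such a set by $e$ (each triangle consuming an edge of the induced $K_{a_1,\ldots,a_{n-1}}$), and then trims or pads the unused $A_n$-vertices to obtain mutually inverse maps between $\Fd(K_{a_1,\ldots,a_n})$ and $\Fd(K_{a_1,\ldots,a_{n-1},e})$. If anything, you are more explicit than the paper, which compresses all of this into ``the proof is identical to Theorem~\ref{thmFstab}''; your attention to triangles avoiding $A_n$ and to well-definedness on isomorphism classes fills in details the paper leaves tacit.
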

\begin{proof}
  The proof is identical to Theorem~\ref{thmFstab}.
  Every element of $\Fd$ is achieved from $K_{a_1, \ldots, a_n}$ by a series of $m$ $\TY$ moves on edge-disjoint triangles.
  Let $H \in \Fd$ be given by $\TY$ moves on disjoint triangles $(\alpha_1, \beta_1, \gamma_1), \ldots, (\alpha_m, \beta_m, \gamma_m)$.
  The introduced degree three vertices $y_1, \ldots, y_m$ cannot be a part of a triangle in $H$, so there is a bijection between sequences of triangles in $K_{a_1, \ldots, a_n}$ and elements of $\Fd(K_{a_1, \ldots, a_n})$.
  Therefore the maximum length of such a sequence is given by $\#E(K_{a_1, \ldots, a_{n-1}})$.

  We now provide injective maps between $\Fd(K_{a_1, \ldots, a_{n-1}, e})$ and $\Fd(K_{a_1, \ldots, a_n})$.
  Let $H \in \Fd(K_{a_1, \ldots, a_n})$ and let $x_1, \ldots, x_m$ be the vertices from $A_n$ appearing in its associated sequence of edge-disjoint triangles.
  Extend the labeling of vertices of $A_n$ so that $A_n = \{x_1, x_2, \ldots, x_m, x_{m+1}, \ldots, x_{a_n}\}$.
  By deleting vertices $\{x_{e+1}, x_{e+2}, \ldots, x_{a_n}\}$, we identify $H$ with an element of $\Fd(K_{a_1, \ldots, a_{n}})$.
  It is clear that adding vertices to an element of $\Fd(K_{a_1, \ldots, e})$ will give an element of $\Fd(K_{a_1, \ldots, a_n})$.
\end{proof}

Combining Lemma~\ref{lem6sum} and Theorem~\ref{thmmpstab} gives our main theorem, Theorem~\ref{thmmain}.

%
%

\section{Multipartite graph families that don't stabilize.}

We have encountered four types of complete multipartite graph whose family sizes do not appear to stabilize: $K_n$, $K_{3,y}$, $K_{1,2,c}$ and $K_{1,1,1,y}$. 
Since a single $\YT$ move on $K_{3,y}$  gives $K_{1,1,1,y-1}$,
\[
    \F(K_{3,y}) = \F(K_{1,1,1,y-1}),
\] relating two of these four types and leaving three. In this section we motivate the exponential growth estimates mentioned in the introduction for these three types.

\begin{table}[]
  \centering
  \begin{tabular}{crrrrrrrrrrrr}
    \toprule
		$n$    & 1 & 2 & 3 & 4 & 5  & 6 & 7  & 8  & 9   & 10   & 11 & 12\\
    \cmidrule(lr){1-1} \cmidrule(lr){2-13}
		$\abs*{\mathcal{F}(K_n)}$ & 1 & 1 & 2 & 2 & 49 & 7 & 20 & 32 & 163 & 1,681 & 56,461 & 5,002,315 \\
    \bottomrule
  \end{tabular}
  \caption{Sizes of Complete Graph Families}\label{tblkn}
\end{table}

For $K_n$, the data we have collected is in Table~\ref{tblkn}.
As with the other types of graphs discussed in this section, there's an anomalous maximum at a small value, $n = 5$, after which the sizes show a steady increase for $n \geq 6$.
Let $\F_v(K_n)$ be the set of graphs in $\F(K_n)$ with exactly $v$ vertices.
A plot of $\abs{\F_v(K_n)}$ for $K_{11}$ and $K_{12}$ is given in Figure~\ref{fig:Kngrowth}.
For $K_n$ with $n \geq 8$, $\abs{\F_v(K_n)}$ seems to be well-approximated by a Gaussian with mean $3n-11$ and standard deviation $1.43 \leq \sigma \leq 1.84$.
This gives the estimate
\[
  f(n) = \frac{6}{5} {\left( 2 \pi \right)}^{3/2} e^{{(n-7)}^2/2}\,.
\]
\begin{figure}[htb]
	\centering
	\includegraphics[scale=0.45]{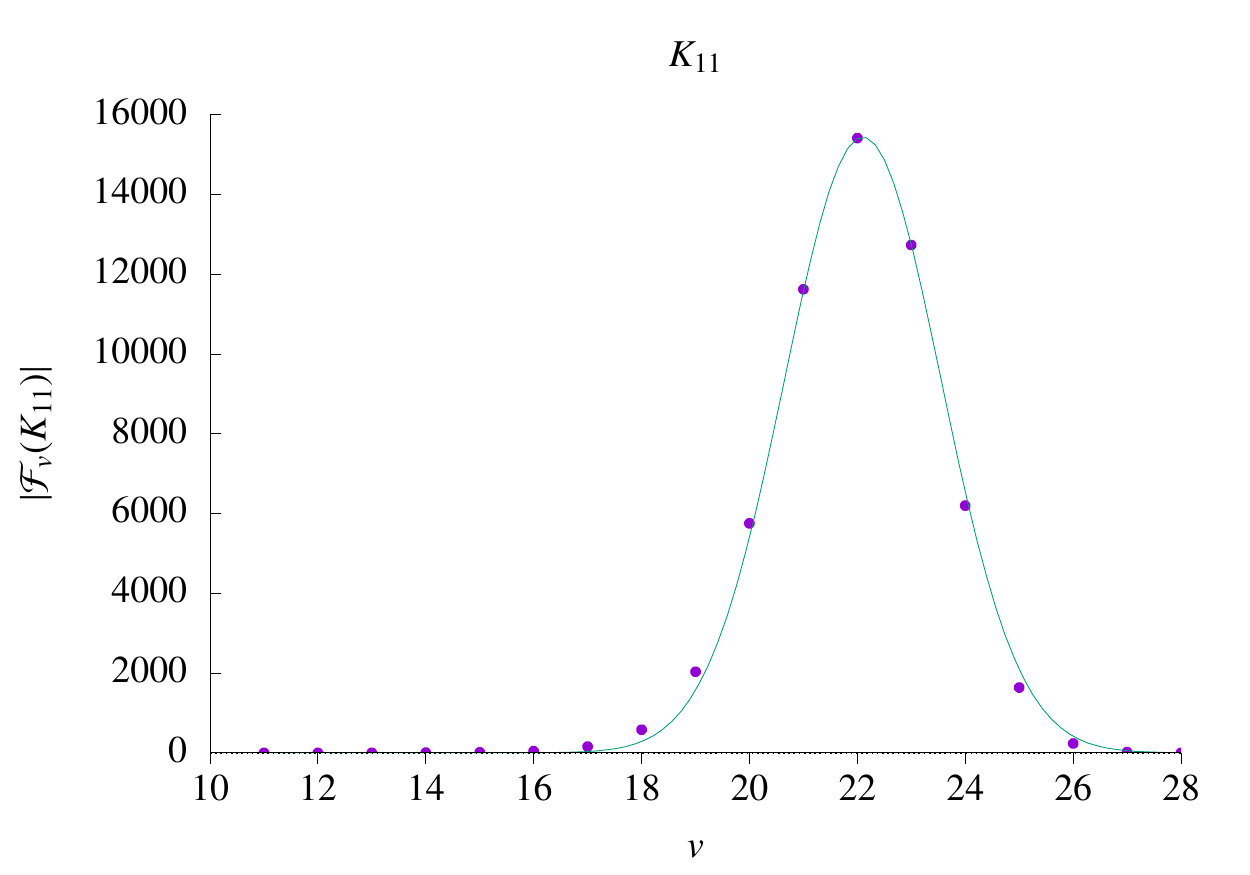}
	\includegraphics[scale=0.45]{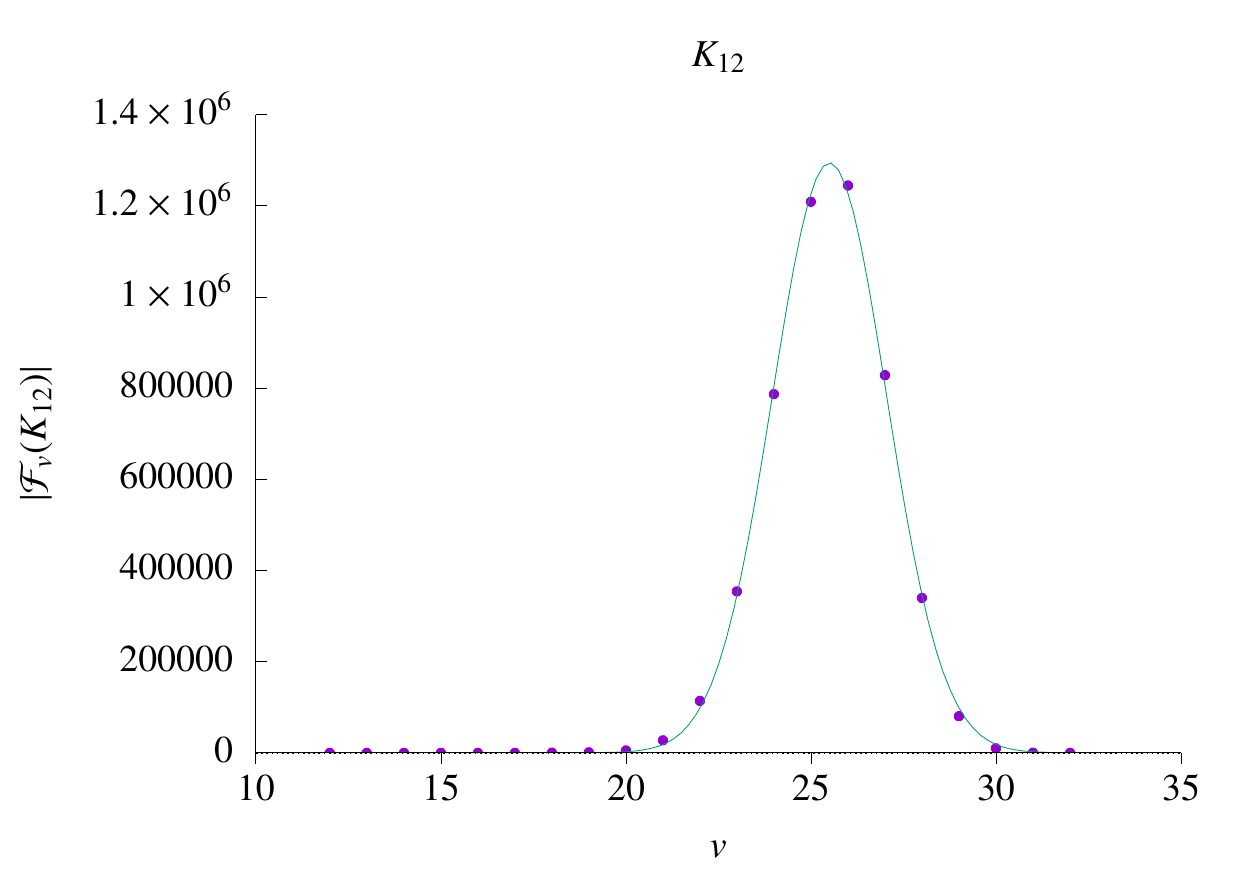}
	\caption{Number of Graphs with $v$ Vertices ($\abs{\F_v(K_n)}$) in Family of $K_n$, with Curve-Fit Gaussian}\label{fig:Kngrowth}
\end{figure}
\begin{table}[]
  \centering
  \begin{tabular}{crrrrrrrrrrr}
    \toprule
    $y$ & 1 & 2 & 3  & 4 & 5  & 6  & 7  & 8  & 9  & 10  & 11 \\
    \cmidrule(lr){1-1} \cmidrule(lr){2-12}
    $\abs*{\mathcal{F}(K_{3,y})}$ & 2 & 2 & 10 & 6 & 10 & 17 & 29 & 52 & 94 & 172 & 315 \\
    &&&&&&&&&&& \\
    $y$ & 12 & 13 & 14  & 15 & 16  &   &   &   &   &   &  \\
    \cmidrule(lr){1-1} \cmidrule(lr){2-6}
    $\abs*{\mathcal{F}(K_{3,y})}$ & 578  & 1061 & 1941 & 3533 & 6408 &  &  &  &  &  &  \\
    \bottomrule
  \end{tabular}
  \caption{Sizes of Families of Bipartite Graphs $K_{3,y}$}\label{tblk3y}
\end{table}
Table~\ref{tblk3y} shows our data for the bipartite graphs $K_{3,y}$. The values seem to follow the recursion
\[
  \abs*{\F(K_{3,y+3})} \approx \abs*{\F(K_{3,y})} + \abs*{\F(K_{3,y+1})} + \abs*{\F(K_{3,y+2})} \text{ for $y \geq 4$.}
\]
If this pattern were to persist, we would get an estimate of the form $\abs*{\F(K_{3, y+3})} = c_1 \gamma_1^y + c_2 \gamma_2^y + c_3 \gamma_3^y$
for constants $c_i$, $i = 1,2,3$, where $\gamma_i$ are the roots of $x^3 = x^2+ x + 1$. In modulus, the largest root is the real root, which is close to $e^{0.61}$.
This suggests that $\abs*{\F(K_{3,y})}$ has a bound of the form $a e^{0.61}$. Fitting the data for $y \geq 4$ to $ae^{by}$ gives $a \approx 2.68$, $b \approx 0.599$.
Rounding $b$ to $\frac35$, we approximated $a$ by $\frac83$ to get the upper bound proposed in Question 2. We've verified that the proposed inequality is
valid for $4 \leq y \leq 13$.

\begin{table}[]
  \centering
  \begin{tabular}{crrrrrrrrrr}
    \toprule
    $c$ & 1 & 2 & 3 & 4 & 5  & 6  & 7  & 8  & 9  & 10  \\
    \cmidrule(lr){1-1} \cmidrule(lr){2-11}
    $ \abs*{\F(K_{1,2,c})}$ & 2 & 3 & 21 & 14 & 22 & 40 & 78 & 153 & 299 & 581 \\
    \bottomrule
  \end{tabular}
\caption{Sizes of Tripartite Graphs $K_{1,2,c}$}\label{tblk12c}
\end{table}

Table~\ref{tblk12c} displays our calculations for the final type of graph, $K_{1,2,c}$. Similar to the previous case, for $c \geq 4$, it appears that 
$|\F(K_{1,2,c+4})|$ is approximately the sum of the previous four terms. Then, the size should grow exponentially with the largest root of
$x^4 = x^3+x^2+x+1$, which is a real root near $e^{0.656}$. Fitting the data for $c \geq 4$ to $ae^b$ gives $a \approx 5.5$ and 
$b \approx 0.67$. Rounding $b$ to $\frac23$, we approximated $a$ by $\frac{16}{3}$ to get the lower bound proposed in Question 3. 

%
%

\section{Precise bounds for simple families.}

In this section we prove three theorems that give precise calculations of size for some simple families. We also state a conjecture.

\setcounter{thm}{2}


\begin{proof}[Proof of Theorem~\ref{thmbip}]
No $\TY$ or $\YT$ moves are possible, so this is clear.
\end{proof}


\begin{proof}[Proof of Theorem~\ref{thmK1bc}]
Let $G = K_{1,b,c}$ with vertices given by $A = \{v\}$, $B = \{w_1, \ldots, w_b\},$ and $C = \{x_1, \ldots, x_c\}$. Since the minimum degree possible is $7$, by previous arguments, we need only consider sequences of edge-disjoint triangles $(v, w_1, x_1), \ldots, (v, w_n, x_n)$ whose corresponding TY moves result in non-isomorphic graphs. Note that we must have $w_i \neq w_j$ for $i \neq j$ since each triangle must go through $v$. Thus we have $b$ sequences which result in distinct graphs. Adding in $K_{1,b,c}$ itself gives the desired result.
\end{proof}

The following lower bound for the $K_{2,b,c}$ family is surprising in that the growth of $g(b,c)$ is quite close to the observed growth of $\abs*{\F(K_{2,b,c})}$ (discussed in more detail below).
Recall that $P(x,y,z)$ is the set of partitions of $z$ into two parts bounded by $x$ and $y$ and
\[
  g(b,c) = 5 + \sum_{i=2}^b \sum_{j=0}^i \left( \abs*{P(i, b-i, j)} \cdot \abs*{P(i, c-i, j)} \right).
\]
 


\begin{proof}[Proof of Theorem~\ref{thmK2bc}]
  The proof of Theorem~\ref{thmFstab} gives us a way to determine a bound on the size of $\F(K_{2,b,c})$. We need a lower bound on the number of sequences of edge-disjoint triangles in $K_{2,b,c}$ such that corresponding $\TY$ moves on these sequences of disjoint triangles result in non-isomorphic graphs.

  A triangle must have a vertex in parts $A, B$, and $C$. Let $A = \{v_1, v_2\}$. At most $b$ triangles can contain $v_1$, and at most $b$ triangles can contain $v_2$.

  The proof proceeds as follows: we first describe a method of choosing a sequence of triangles on which we will perform $\TY$ moves.
	We identify each triangle in the sequence with its vertices.
	Then, we argue that no two distinct such choices give isomorphic graphs.

  Suppose our sequence of $n$ triangles is such that $n = i+j$ with $2 \leq i \leq b$, $0 \leq j \leq i$ where $i$ is the number of triangles involving $v_1$.
	Fix a labeling of the $B$ and $C$ vertices such that the $i$ triangles with a $v_1$ vertex are $(v_1, w_\alpha, x_\alpha)$ for $1 \leq \alpha \leq i$.
	Partition $B$ and $C$ based on these choices. Define
  \begin{align*}
    B_1 &= \{ w_1, \ldots, w_i \}, \quad B_2 = \{w_{i+1}, \ldots, w_b\} \\
    C_1 &= \{ x_1, \ldots, x_i \}, \quad C_2 = \{x_{i+1}, \ldots, x_c\}.
  \end{align*}
  Thus $B_1$ and $C_1$ are the vertices in triangles including the $v_1$ vertex.

  Since $n = i + j$, then $j \leq i$ is the number of triangles in our sequence that include vertex $v_2$.
	For each of the $j$ triangles involving $v_2$, we must pick an element of either $B_1$ or $B_2$ and an element of either $C_1$ or $C_2$.
	The number of ways to choose $j$ triangles in this way is given by $\abs*{P(j,b-j,i)} \cdot \abs*{P(j,c-j,i)}$.
	We'll assume $i \geq 2$, so that there remain enough edges between $B_1$ and $C_1$ to form the $j$ triangles on $v_2$.
	Indeed, there are $i^2$ edges between $B_1$ and $C_1$ and $i$ of them are used for the triangles containing $v_1$.
	Assuming $i \geq 2$, there remain $i^2 - i \geq i \geq j$ edges.
	It's easy to check that there's one way to form a graph when $i = 0$ and four for $i = 1$.

  Define $G = G(s, t)$ to be the graph obtained by performing $\TY$ moves on the $n=i+j$ triangles, parameterized by $s$ and $t$ as follows: our sequence of edge-disjoint triangles includes the $i$ triangles $(v_1, w_\alpha, x_\alpha)$ for $1 \leq \alpha \leq i$ and the $j$ triangles $\{(v_2, w_{\beta_1}, x_{\gamma_1}), \ldots, (v_2, w_{\beta_j}, x_{\gamma_j})\}$, where $w_{\beta_1}, \ldots, w_{\beta_{s}} \in B_1$, $w_{\beta_{s+1}}, \ldots, w_{\beta_{j}} \in B_2$ and similarly $x_{\gamma_1}, \ldots, x_{\gamma_{t}} \in C_1$, $x_{\gamma_{t+1}}, \ldots, x_{\gamma_{j}} \in C_2$ with $0 \leq s, t \leq j$. If $s = 0$, then all $w_{\beta}$ vertices are in $B_2$ and similarly for $t = 0$.

  To identify whether or not two such graphs might be isomorphic, let's identify the degrees of the vertices in $G(s,t)$.
	In $K_{2,b,c}$ there are two vertices of degree $b+c$, $b$ of degree $c+2$, and $c$ of $b+2$.
	After the $i$ $\TY$ moves on triangles with a $v_1$ vertex, $v_2$ still has degree $b+c$, $v_1$ will have degree $b+c-i$, there are $i$ in $B_1$ of degree $c+1$ and the remaining $b-i$ vertices in $B_2$ of degree $c+2$.
	Similarly, the $i$ vertices of $C_1$ have degree $b+1$ and the $c-i$ vertices in $C_2$ remain at $b+2$.
	Finally, we have added $i$ degree $3$ vertices.
	After a further $j$ $\TY$ moves, $G(s,t)$ has the following degrees and counts: one of degree $b+c-i$, one of degree $b+c-j$, $s$ of $c$, $i+j-2s$ of $c+1$, $b+s-i-j$ of $c+2$, $t$ of $b$, $i+j-2t$ of $b+1$, $c+t-i-j$ of $b+2$, and $i+j$ vertices of degree $3$.

  We will argue that two such  graphs $G_1 = G(s_1, t_1)$ and $G_2 = G(s_2,t_2)$ can be isomorphic only if $(i_1, j_1, s_1, t_1) = (i_2, j_2, s_2, t_2)$; the four constants must agree.
	We note that the theorem holds if $b = 3$ as illustrated by Tables~\ref{fig:k2xy} and \ref{fig:g2xy} below.
	Both $g(3,c)$ and $|\F(K_{2,3,c})|$ stabilize for $c \geq 6$, so it is enough to verify the result for $4 \leq c \leq 6$.
	So, we will assume  $b > 3$.
	Counting the vertices of degree $3$ we have $i_1+j_1 = i_2 + j_2$ and the vertices of degree $b$ show that $t_1 = t_2$.
	We can identify $v_1$ and $v_2$ as the two vertices that, between them, are adjacent to all the degree three vertices.
	Comparing the degrees of $v_1$ and $v_2$, since $j \leq i$ (if $i = j$, then they are interchangeable), we can identify the $i$'s and $j$'s, which shows $i_1 = i_2$ and $j_1 = j_2$.

  It remains to argue $s_1 = s_2$.
	Ordinarily, this can be done by comparing the vertices of degree $c$.
	However, there may be additional vertices of degree $c$ beyond the $s$ that we expect.
	For example, if $c = b+1$, we would have $s + i+j - 2t$ vertices of degree $c$.
	Since we've already shown the other three constants agree, comparing the vertices of degree $c$ still will give us the required $s_1 = s_2$.
	Similarly if $c = b+2$, the additional $c+t-i-j$ vertices of degree $c$ cause no problem as we've already established that this number is the same for both graphs.
	It may be that $v_1$ or $v_2$ have degree $c$, but we've discussed how to identify these vertices and, for the graphs to be isomorphic, their degrees must agree in $G_1$ and $G_2$.
\end{proof}

Data for both $|\F(K_{2,x,y})|$ and $g(x,y)$ is given in Tables~\ref{fig:k2xy} and \ref{fig:g2xy}, respectively.
Based on the table values, it appears that $g(x,2x) = g(x,2x-1) + 1$, which corresponds to the pattern $|\F(K_{2,x,2x})| = |\F(K_{2,x,2x-1})| + 1$ that we observe for $3 \leq x \leq 6$ (and conjecture for greater $x$, see below).
The growth patterns of the two functions are similar in many respects.
For example, we have shown in Theorem~\ref{thmFstab} that the size of the graph family of $K_{2,x,y}$ stabilizes at $K_{2,x,2x}$ and, in Table~\ref{fig:g2xy}, $g(x,y)$ shows a similar stabilization.

\begin{table}[htb]
  \centering
  \begin{tabular}{rcrrrrrrrrr}
    \toprule
    $x$ & & \multicolumn{9}{c}{$y$}\\
    \cmidrule{3-11}
     & & 4 & 5 & 6 & 7 & 8 & 9 & 10 & 11 & 12\\
    \midrule
    3 & & 93 & 96 & 97 & 97 & 97 & 97 & 97 & 97 & 97\\
    4 & & 43 & 70 & 78 & 80 & 81 & 81 & 81 & 81 & 81\\
    5 & & 70 & 96 & 166 & 184 & 192 & 194 & 195 & 195 & 195\\
    6 & & 78 & 166 & 215 & 380 & 428 & 447 & 455 & 457 & 458\\
    7 & & 80 & 184 & 380 & 450 & 827 & 931 & 981 & 1000 & 1008\\
    \bottomrule
  \end{tabular}
  \caption{$|\F(K_{2,x,y})|$\label{fig:k2xy}}
\end{table}

\begin{table}[htb]
  \centering
  \begin{tabular}{rcrrrrrrrrr}
    \toprule
    $x$ & & \multicolumn{9}{c}{$y$}\\
    \cmidrule{3-11}
     & & 4 & 5 & 6 & 7 & 8 & 9 & 10 & 11 & 12\\
    \midrule
    3 & & 23 & 25 & 26 & 26 & 26 & 26 & 26 & 26 & 26\\
    4 & & 37 & 45 & 50 & 52 & 53 & 53 & 53 & 53 & 53\\
    5 & & 45 & 65 & 79 & 87 & 92 & 94 & 95 & 95 & 95\\
    6 & & 50 & 79 & 109 & 129 & 143 & 151 & 156 & 158 & 159\\
    7 & & 52 & 87 & 129 & 169 & 199 & 219 & 233 & 241 & 246\\
    \bottomrule
  \end{tabular}
  \caption{$g(x,y)$\label{fig:g2xy}}
\end{table}

We conclude this section with a conjecture.

\begin{conj}
  Let $n \geq 3$, $1 \leq a_1 \leq \cdots \leq a_n$, and $e = \#E(K_{a_1, \ldots, a_{n-1}})$.
  If $a_1 + \cdots + a_{n-1} > 6$ and $a_n \geq e$, then
  \[
      \abs*{\mathcal{F} (K_{a_1, \ldots, a_{n-1}, e-1})} = \abs*{\mathcal{F} (K_{a_1, \ldots, a_{n-1}, a_n})} - 1.
  \]
\end{conj}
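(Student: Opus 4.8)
The plan is to work entirely with families of descendants and to pin down the single graph that is gained when the last part grows from $e-1$ to $e$ vertices. Since $a_1 + \cdots + a_{n-1} > 6$, Lemma~\ref{lem6sum} gives $\Fd = \F$ for every graph $K_{a_1,\ldots,a_{n-1},m}$ whose part of size $m$ is the largest (so that the minimum degree is $a_1+\cdots+a_{n-1} > 6$); this covers $m = a_n$, $m = e$, and, provided $e-1 \geq a_{n-1}$, also $m = e-1$. By Theorem~\ref{thmmain} it then suffices to prove $\abs*{\Fd(K_{a_1,\ldots,a_{n-1},e})} = \abs*{\Fd(K_{a_1,\ldots,a_{n-1},e-1})} + 1$.

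Following the proof of Theorem~\ref{thmmpstab}, I would encode each descendant of $K_{a_1,\ldots,a_{n-1},m}$ by the isomorphism class of its sequence of edge-disjoint triangles, writing $A_n$ for the last part. Each triangle meets $A_n$ in at most one vertex, so it consumes at least one edge of the fixed graph $K_{a_1,\ldots,a_{n-1}}$; hence a descendant uses at most $e$ triangles and at most $e$ distinct vertices of $A_n$. For an isomorphism class $H$ let $\mu(H)$ be the minimum number of $A_n$-vertices over all its realizations. This is an isomorphism invariant, the deletion and addition of unused (hence interchangeable) $A_n$-vertices yields exactly the bijection of Theorem~\ref{thmmpstab}, and it identifies $\Fd(K_{a_1,\ldots,a_{n-1},e-1})$ with $\{H \in \Fd(K_{a_1,\ldots,a_{n-1},e}) : \mu(H) \leq e-1\}$. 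Thus the conjecture is equivalent to the claim that exactly one descendant satisfies $\mu(H) = e$.

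The heart of the argument is that this graph is unique and equals the ``maximal'' descendant $H^*$ obtained by performing a $\TY$ move on every one of the $e$ edges of $K_{a_1,\ldots,a_{n-1}}$, each completed to a triangle by its own private $A_n$-vertex. Because the minimum degree exceeds $6$, the proof of Lemma~\ref{lem6sum} shows the only degree-three vertices of a descendant are trivial, so the number of degree-three vertices equals the number of $\TY$ moves and is therefore determined by $H$. If $\mu(H) = e$, then every realization uses $e$ distinct $A_n$-vertices, hence $e$ triangles; counting the $e$ edges of $K_{a_1,\ldots,a_{n-1}}$ forces these triangles to partition that edge set one-to-one, and the $e$ distinct $A_n$-vertices force each to lie in exactly one triangle, so up to the symmetry permuting $A_n$ the configuration is unique, namely $H^*$. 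Conversely $\mu(H^*) = e$: any realization uses $e$ moves by the invariant, and if two triangles shared an $A_n$-vertex that vertex would have degree $(a_1+\cdots+a_{n-1})-2$, whereas in $H^*$ the used $A_n$-vertices have degree $(a_1+\cdots+a_{n-1})-1$, the internal vertices degree $e$, and the trivial vertices degree $3$, so this value occurs nowhere and no realization can reuse an $A_n$-vertex.

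The main obstacle I anticipate is precisely this rigidity step: one must simultaneously rule out that a non-maximal descendant is forced to use all $e$ vertices and that $H^*$ can be compressed onto fewer, both of which I would settle through the degree-sequence invariants above together with the fact that the number of $\TY$ moves is an invariant of $H$. A genuinely separate, secondary issue is the degenerate range $e-1 < a_{n-1}$, which by $e \geq a_{n-1}(a_1+\cdots+a_{n-2})$ occurs only for $n = 3$ and $a_1 = 1$; there the part of size $e-1$ is no longer largest and Lemma~\ref{lem6sum} does not apply verbatim, so I would instead read both family sizes from Theorem~\ref{thmK1bc} and verify the finitely many remaining small instances (such as $K_{1,5,6}$ versus $K_{1,6,6}$) directly.
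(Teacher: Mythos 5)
First, be aware that the paper does not prove this statement: it is explicitly a conjecture, supported only by the theorem immediately following it (the case $K_{1,b,c}$, handled via Theorem~\ref{thmK1bc} exactly as in your degenerate case) and by computer verification for $(a,b)\in\{(2,5),(2,6),(3,4)\}$. So your proposal is not reproducing anything in the paper; it attempts something the authors left open. Your overall strategy looks sound: reduce via Theorem~\ref{thmmain} and Lemma~\ref{lem6sum} to comparing $\Fd(K_{a_1,\ldots,a_{n-1},e})$ with $\Fd(K_{a_1,\ldots,a_{n-1},e-1})$, identify the difference with the isomorphism classes that need all $e$ vertices of $A_n$, and exhibit the unique such class $H^*$ (one private $A_n$-vertex per edge of $K_{a_1,\ldots,a_{n-1}}$). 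Your uniqueness direction is correct: $\mu(H)=e$ forces exactly $e$ triangles, each containing exactly one $A_n$-vertex and exactly one edge of $K_{a_1,\ldots,a_{n-1}}$, with those edges partitioning the edge set, so the configuration is unique up to permuting $A_n$. Your isolation of the degenerate range $e-1<a_{n-1}$ (only $n=3$, $a_1=1$) and its treatment via Theorem~\ref{thmK1bc} reproduces the paper's own partial result.

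The gap is in the converse direction, $\mu(H^*)=e$. You argue that if two triangles shared an $A_n$-vertex, that vertex would have degree $d-2$ (writing $d=a_1+\cdots+a_{n-1}$), and that this degree occurs nowhere in $H^*$. But a vertex shared by $k$ triangles has degree $d-k$, and you must exclude $d-k\in\{e,\,d-1,\,3\}$ for every $k\geq 2$, not just $k=2$. The value $3$ is excluded by the halving argument of Lemma~\ref{lem6sum} that you invoke (edge-disjointness gives $k\leq\lfloor d/2\rfloor$, so $d-k\geq 4$), and $d-1$ is excluded trivially; but you never rule out $d-k=e$, i.e., that a reused $A_n$-vertex could masquerade degree-wise as one of the internal vertices of $H^*$, which all have degree $e$. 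The fix is short: in your non-degenerate case $K_{a_1,\ldots,a_{n-1}}$ is not a star, has $d>6$ vertices, minimum degree at least $2$, and is not a cycle, so $e>d>d-k$. (Alternatively, avoid degree coincidences entirely: in any realization of $H^*$ the $e$ forced triangles use every edge of $K_{a_1,\ldots,a_{n-1}}$ exactly once, so the internal and trivial vertices contribute identical degree multisets in both graphs; cancelling them leaves the multiset identity $\{d-k_x\}_{x\in A_n}=\{d-1\}^{e}$, forcing every $k_x=1$.) With that patched, and granting your add/delete-unused-vertex bijection the same level of rigor the paper grants itself in Theorem~\ref{thmmpstab}, your argument would upgrade the conjecture to a theorem in the non-degenerate case and, combined with your $K_{1,b,c}$ analysis, settle it entirely.
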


The conjecture is supported by experimental data for some tripartite graphs. 
Note that for $K_{a,b,c}$, with $a \leq b \leq c$, we have $e = ab$.
Using Theorem~\ref{thmK1bc}, it is straight-forward to verify the conjecture for triples $1,b,c$.

\begin{thm}
  If $6 \leq b \leq c$, then $ \abs*{\mathcal{F} (K_{1,b, b-1})} = \abs*{\mathcal{F} (K_{1,b,c})} - 1$.
\end{thm}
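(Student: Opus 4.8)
The plan is to read off both family sizes from Theorem~\ref{thmK1bc}. That theorem gives $\abs*{\F(K_{1,b,c})} = 1+b$ directly, since $6 \leq b \leq c$, so the right-hand side of the claim equals $b$. It therefore suffices to show $\abs*{\F(K_{1,b,b-1})} = b$. Since $K_{1,b,b-1}$ and $K_{1,b-1,b}$ are the same graph, when $b \geq 7$ we may apply Theorem~\ref{thmK1bc} a second time, now with smallest non-singleton part $b-1 \geq 6$, to obtain $\abs*{\F(K_{1,b-1,b})} = 1 + (b-1) = b$, and the two computations combine to give the claim.

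Conceptually, the reason both evaluations work is the mechanism already exploited in the proof of Theorem~\ref{thmK1bc}: in any graph $K_{1,s,t}$ with $1 \leq s \leq t$ every triangle passes through the unique vertex $v$ of the singleton part, and a sequence of edge-disjoint $\TY$ triangles through $v$ must use distinct vertices of the size-$s$ part and distinct vertices of the size-$t$ part. Hence the number $n$ of moves satisfies $0 \leq n \leq \min(s,t) = s$, the resulting descendant is determined up to isomorphism by $n$ alone (the two parts are internally symmetric and distinct $n$ yield distinct vertex counts, so the descendants are pairwise non-isomorphic), and, once one knows $\Fd = \F$, this produces exactly $s+1$ cousins. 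Applying this with $s = b$ recovers $1+b$; applying it with $s = b-1$ (for $K_{1,b-1,b}$) gives $b$.

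The one place needing care — and the main obstacle — is the boundary case $b = 6$, where $K_{1,b,b-1} = K_{1,6,5} = K_{1,5,6}$ has a non-singleton part of size $5 < 6$ and so lies outside the hypotheses of Theorem~\ref{thmK1bc} (indeed $1 + 5 = 6 \leq 6$, so Lemma~\ref{lemFdeqF} does not apply to certify $\Fd = \F$). Here I would re-run the degree bookkeeping by hand. Writing the parts as $\{v\}$, $P$ with $\abs*{P}=5$, and $Q$ with $\abs*{Q}=6$, the vertex degrees are $\deg v = 11$, $\deg p = 7$ for $p \in P$, and $\deg q = 6$ for $q \in Q$. Since each $p$ and each $q$ lies in at most one of the edge-disjoint triangles through $v$, any descendant drops each such degree by at most one (to $\geq 6$ and $\geq 5$ respectively), while $v$ loses at most $\min(5,6)=5$, ending at degree $\geq 6$; thus no nontrivial degree-three vertex is ever created, $\Fd(K_{1,5,6}) = \F(K_{1,5,6})$, and the counting above yields $\min(5,6)+1 = 6 = b$ cousins. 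With this boundary case settled, $\abs*{\F(K_{1,b,b-1})} = b = (1+b)-1 = \abs*{\F(K_{1,b,c})}-1$ for all $6 \leq b \leq c$, as required.
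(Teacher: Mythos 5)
Your proof is correct and takes essentially the same route as the paper: both read off $\abs*{\F(K_{1,b,c})} = 1+b$ from Theorem~\ref{thmK1bc}, reapply that theorem to $K_{1,b-1,b} = K_{1,b,b-1}$ when $b \geq 7$, and treat the boundary case $b = 6$ separately. The paper disposes of $K_{1,5,6}$ as an ``easy verification,'' so your explicit degree bookkeeping there (every triangle passes through the apex $v$, each $\TY$ move lowers each triangle vertex's degree by exactly one, hence no nontrivial degree-three vertex ever appears and $\Fd(K_{1,5,6}) = \F(K_{1,5,6})$ has exactly $\min(5,6)+1 = 6$ elements) is simply a fleshed-out version of the step the paper leaves to the reader.
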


\begin{proof}
  By Theorem~\ref{thmK1bc}, $\abs*{\F (1,b,c)} = 1+b$.
  We must show that $\abs*{\F(1,b,b-1)} = b$.
  If $b > 6$, the same theorem shows $\abs*{\F(1,b,b-1)} = \abs*{\F(1,b-1,b)} = b$, as required.
  All that remains is the easy verification that, when $b = 6$,
  $\abs*{\F(1,6,5)} = \abs*{\F(1,5,6)} = 6$.
\end{proof}

In addition to the triples covered by the theorem above, Table~\ref{fig:k2xy} shows the conjecture holds for $(a,b) = (2,5)$ or $(2,6)$.
Using a computer, we have also verified the case $(a,b) = (3,4)$.

\bibliographystyle{plain}
\bibliography{MultiGraphs}

\end{document}